\newcommand{\Mod}[1]{\ (\text{mod}\ #1)}
\newtheorem{theorem}{Theorem}
\newtheorem{cor}[theorem]{Corollary}
\newtheorem{rem}[theorem]{Remark}
\newtheorem{prb}[theorem]{Problem}
\title{The number of dominating $k$-sets of paths, cycles and wheels}
\author{Jorge L. Arocha\\
  \small Instituto de Matem\'{a}ticas\\
  \small Universidad Nacional Aut\'{o}noma de M\'{e}xico\\
  \small Ciudad Universitaria, M\'{e}xico D.F. 04510\\
  \small e-mail: arocha@matem.unam.mx\\
  \small and\\
        Bernardo Llano\\
  \small Departamento de Matem\'{a}ticas\\
  \small Universidad Aut\'{o}noma Metropolitana - Iztapalapa\\
  \small San Rafael Atlixco 186, Colonia Vicentina, M\'{e}xico, D.F. 09340 \\
  \small e-mail: llano@xanum.uam.mx}
\begin{document}
\maketitle

\abstract{We give a shorter proof of the recurrence relation for the
domination polynomial $\gamma (P_{n},t)$ and for the number $\gamma
_{k}(P_{n})$ of dominating $k$-sets of the path with $n$ vertices.
For every positive integers $n$ and $k,$ numbers $\gamma
_{k}(P_{n})$ are determined
solving a problem posed by S. Alikhani in CID 2015. Moreover, the
numbers of dominating $k$-sets $\gamma _{k}(C_{n})$ of cycles and
$\gamma _{k}(W_{n})$ of wheels with $n$ vertices are computed.

\medskip
\noindent \textbf{Keywords:} dominating set, domination polynomial,
path, cycle, wheel

\medskip
\noindent \textbf{2010 Mathematics Subject Classification:} 05C30,
05C69 }

\section{Introduction and preliminaries}

Let $G=\left( V,E\right) $ be a simple graph and a subset of
vertices $D\subseteq V.$ A set $D$ is a \textit{dominating set} if
for every $y\in V-D$ there exists $x\in D$ such that $\left\{
x,y\right\} \in E.$ For every $x\in V$, let $N\left( x\right) $
denote the neighborhood of $x$ and $N\left[ x\right] =N\left(
x\right) \cup \left\{ x\right\}$ is the closed neighborhood of $x.$
Using this notation, $D\subseteq V$ is dominating set if $N\left[
y\right] \cap D\neq \emptyset $ for every $y\in V-D.$ The minimum
cardinality of a dominating set in $G$ is the \textit{domination
number} of $G$ and it is usually denoted by $\gamma (G)$.

Let $\mathcal{D}_{G}$ be the set of every dominating set of $G$.
Observe that $\emptyset \in \mathcal{D}_{G}$ and if $D\in
\mathcal{D}_{G}$ and $D\subset D^{\prime }$, then $D^{\prime }\in
\mathcal{D}_{G}.$ In \cite{AroLl}, the so called\textit{\ domination
polynomial} of a graph $G$ was first introduced as follows.

For every graph $G$, we denote by $\gamma _{k}\left( G\right) $ the
\emph{number of dominating sets of cardinality} $k$ (briefly, the
\emph{dominating $k$-sets}) of $G$. The domination polynomial is
define to be
\[
\gamma \left( G,t\right) =\sum_{k=1}^{n}\gamma _{k}\left( G\right)
t^{n-k},
\]
where $n$ is the number of vertices of $G$.

By definition, we set $\gamma _{0}\left( G\right) =0.$ If $\Phi $
denotes the empty graph, then $\gamma \left( \Phi ,t\right) =0$
since $\mathcal{D}_{\Phi }=\{\emptyset \}$. In the aforementioned
paper, we determined the following domination polynomials:

\begin{equation} \label{Complete}
\gamma (K_{n},t) =\sum_{k=1}^{n}{n \choose
k}t^{n-k}=(1+t)^{n}-t^{n},
\end{equation}
where $K_{n}$ is the complete graph with $n$ vertices,

\begin{equation} \label{Union}
\gamma (\cup _{i=1}^{n}G_{i},t)=\gamma (G_{1},t)\gamma
(G_{2},t)\cdots \gamma (G_{n},t),
\end{equation}
where $\cup _{i=1}^{n}G_{i}$ is the disjoint union of the graphs
$G_{i}$ for $1\leq i\leq n$ (\cite{AroLl}, Theorem 3.1 and Corollary
3.2) and

\begin{align} \label{Sum}
\gamma (G+H,t)= \,&\gamma (K_{n+m},t)-t^{m}\left[ \gamma
(K_{n},t)-\gamma (G,t)\right] \nonumber \\
&-t^{n}\left[ \gamma (K_{m},t)-\gamma(H,t)\right] ,
\end{align}
where $G$ and $H$ are graphs with $n$ and $m$ vertices,
respectively, and $G+H$ denotes the sum of $G$ and $H$
(\cite{AroLl}, Theorem 3.3).

Equivalently (see \cite{AP}, \cite{DT} and \cite{KPSTT}), the
domination
polynomial can be defined as%
\[
D\left( G,t\right) =\sum_{k=0}^{n}\gamma _{k}\left( G\right) t^{k}
\]
and therefore, $\gamma \left( G,t\right) =t^{n}D(G,\frac{1}{t})$.

In recent years, the domination polynomial of graphs has received a
lot of attention. In \cite{DT}, the polynomial is related to network
reliability measure for some special service networks. This new
measure was defined by the authors as the domination and the
corresponding domination reliability polynomial is associated. Some
interesting theoretical problems are studied. It is also proved that
computing domination reliability is NP-hard. The book \cite{BT-book}
by Beichelt and Tittmann broadens the study of network reliability.
The connection of reliability with graph theory and combinatorial
analysis is widely displayed. The book is an excellent example of
applying nontrivial graph theoretical tools to solve key problems in
reliability analysis.

The roots of the domination polynomial have recently been studied,
see for example \cite{AAP}, \cite{BT} and \cite{MRB}. Interesting
problems are still open, particularly, those relating the existence
of some special roots of the domination polynomial to specific
properties of the graph.

Let us define the following binary relation between graphs $G$ and
$H$. We say that $G\sim H$ if and only if $\gamma \left( G,t\right)
=\gamma \left( H,t\right) $. Clearly, this is an equivalence
relation. The determination of the equivalence classes of the
quotient set is a hard important problem. A graph $G$ is
\textit{uniquely determined by its domination polynomial} if $\left[
G\right] =\{G\}.$ In \cite{AAP}, it is proved that $C_{n}=\{C_{n}\}$
for $n\equiv 0,2 \Mod{3}$ and $\left[ P_{n}\right] $ contains two
graphs for $n\equiv 0\Mod{3}$ ($P_{n}$ denotes the path with $n$
vertices). It is also conjectured that $C_{n}=\{C_{n}\}$ for
$n\equiv 1\Mod{3}$.

\begin{prb}
Give a characterization of $\left[ P_{n}\right] $ for $n\equiv
1,2\Mod{3}$.
\end{prb}

In \cite{BA-MP}, the authors characterize those complete $r$-partite graphs
that are uniquely determined by their domination polynomials (see Theorem 2).

The computation of the equivalence classes of almost all other graphs remains an
interesting open question.

The total domination polynomial of a graph is studied in \cite{Dod}.
It is the generating function for the number of total dominating
$k-$sets in a graph $G$. A generalization of the total domination
polynomial, called the trivariate total domination polynomial is
investigated, in particular, this kind of polynomial is studied for
some graph products. There are other variations and generalizations
of the domination polynomial as the independent domination
polynomial or the bipartition polynomial. The already mentioned
paper by Dod and the recent work \cite{DKPT} by Dod, Kotek, Preen
and Tittmann are suitable references for these topics.

For a simple graph $G=(V,E)$, $v\in V$ and $e=\{u,v\}\in E$, we
define the following operations: $G-v$ is the usual \textit{vertex
deletion}; $G-N[v]$ is the \textit{vertex extraction} defined to be
the graph $\bigcup_{u\in N[v]}(G-u)$; $G/v$ is the \textit{vertex
contraction} defined as the graph obtained from $G$ by removing $v$
and adding the edges between any pair of non-adjacent neighbors of
$v$; $G/N(v)$ is the \textit{neighborhood contraction} defined as
the graph defined from $G$ by removing every vertex of $N(v)$ (but
not itself $v$) and adding the edges $\{v,w\}$ for every $w\in
N(N(v))$, where $N(N(v))$ is the \textit{second neighborhood} of
$v$; $G-e$ is the usual \textit{edge deletion}; $G/e$ is the
usual\textit{\ edge contraction} and  $G\dagger e$ is the
\textit{edge extraction} defined to be the graph
$G-\{u,v\}=(G-u)\cup (G-v)$.

One of the main results of \cite{KPSTT} is the following

\begin{theorem} [(\cite{KPSTT}, Theorems 2.4 and 2.5)]
There do not exist rational functions $f_{i},g_{j}\in \mathbb{R}(t)$
with $i\in \{1,2,3,4\}$ and $j\in \{1,2,3\}$ such that for every
graph $G$, $v\in
V $ and $e\in E$, it holds that%
\begin{eqnarray*}
D(G,t)&=&f_{1}D(G-v,t)+f_{2}D(G/v,t)+f_{3}D(G-N[v],t)\\
  &&+f_{4}D(G/N(v),t)\text{ and}\\
D(G,t)&=&g_{1}D(G-e,t)+g_{2}D(G/e,t)+g_{3}D(G\dagger e,t).
\end{eqnarray*}
\end{theorem}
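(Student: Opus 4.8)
The plan is to argue by contradiction, reducing each non-existence statement to a finite linear-algebra computation over the field $\mathbb{R}(t)$. Suppose rational functions $f_1,f_2,f_3,f_4$ satisfying the first identity of the theorem existed. Applying that identity to a list of test pairs $(G_1,v_1),\dots,(G_5,v_5)$ yields the over-determined linear system
\[
D(G_j,t)=f_1 D(G_j-v_j,t)+f_2 D(G_j/v_j,t)+f_3 D(G_j-N[v_j],t)+f_4 D(G_j/N(v_j),t),\quad j=1,\dots,5,
\]
in the four unknowns $f_1,\dots,f_4$. It therefore suffices to choose the five test pairs so that the $5\times5$ matrix whose $j$-th row is
\[
\bigl(D(G_j,t),\,D(G_j-v_j,t),\,D(G_j/v_j,t),\,D(G_j-N[v_j],t),\,D(G_j/N(v_j),t)\bigr)
\]
is non-singular over $\mathbb{R}(t)$: for such a choice the augmented matrix of the system has rank $5$ while its coefficient matrix, being $5\times4$, has rank at most $4$, so the system is inconsistent and no such $(f_1,\dots,f_4)$ can exist. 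The edge statement is handled in exactly the same way: a universal recurrence would, evaluated on four well-chosen pairs $(G_i,e_i)$, force the $4\times4$ matrix with rows $\bigl(D(G_i,t),\,D(G_i-e_i,t),\,D(G_i/e_i,t),\,D(G_i\dagger e_i,t)\bigr)$ to be singular, so it is enough to exhibit four such pairs for which it is not.

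To carry this out I would first build a small table of domination polynomials. From \eqref{Complete} and the identity $\gamma(G,t)=t^{n}D(G,1/t)$ one gets $D(K_{n},t)=(1+t)^{n}-1$, and \eqref{Union} rewritten for $D$ gives $D(G\cup H,t)=D(G,t)D(H,t)$; these, together with direct enumeration of dominating sets, cover every graph on at most five vertices. I would then pick concrete small test graphs with a designated vertex — natural candidates are $K_1$, $K_2=P_2$, $P_3$, $P_4$, $C_3=K_3$, $C_4$ and the star $K_{1,3}$ — bearing in mind that the choice of $v$ (a leaf versus a centre, an endpoint versus an interior vertex) matters a great deal, since $G-N[v]$, $G/v$ and $G/N(v)$ are governed by the local structure at $v$. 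For each pair I would compute the four derived graphs, collapse any parallel edges created by a contraction, factor over connected components when an operation disconnects the graph, read off the five domination polynomials, assemble the matrix, and verify that its determinant is a nonzero element of $\mathbb{R}(t)$ — most conveniently by specializing $t$ to a numerical value (for instance $t=1$, where $D(G,1)$ is just the number of dominating sets of $G$) and checking that the resulting integer determinant is nonzero, or by comparing top-degree coefficients in $t$. The edge statement is treated identically with, say, $K_2$, $P_3$, $P_4$ and $C_3$ and suitable designated edges.

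The main obstacle — essentially the whole content of the argument — is the combinatorial bookkeeping together with the choice of test graphs. One must identify $G-N[v]$, $G/v$, $G/N(v)$, $G/e$ and $G\dagger e$ correctly (contractions may create parallel edges that must be collapsed in the simple-graph setting, and may split off isolated vertices, which change the domination polynomial drastically), compute the polynomials without arithmetic slips, and — the genuinely delicate point — select the test pairs so that the resulting matrix is actually non-singular rather than accidentally degenerate because the chosen graphs are too small, too symmetric, or too similar. I expect a working set of test pairs to be found among graphs on at most five vertices; if the first natural choices give a singular matrix, one enlarges the pool of candidates and repeats the determinant check until a non-vanishing determinant appears, at which point the contradiction — and hence both non-existence claims — is immediate.
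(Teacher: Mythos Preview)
The paper does not prove this theorem at all: it is quoted verbatim from \cite{KPSTT} as background (``One of the main results of \cite{KPSTT} is the following''), with no argument given. There is therefore no proof in the paper to compare your attempt against.

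That said, your strategy is the right one and is, in outline, exactly how the result is established in \cite{KPSTT}: treat the hypothetical recurrence as a linear relation over $\mathbb{R}(t)$, evaluate it on enough small test pairs to over-determine the unknowns, and check that the resulting matrix has full rank. What you have written, however, is a plan rather than a proof: you never fix a specific list of test pairs, never compute the derived graphs or their domination polynomials, and never verify that any determinant is nonzero. The phrases ``I would compute'', ``I expect a working set \dots to be found'', and ``if the first natural choices give a singular matrix, one enlarges the pool'' all describe work that remains to be done. Until you actually produce five pairs $(G_j,v_j)$ (respectively four pairs $(G_i,e_i)$), tabulate the relevant polynomials, and exhibit a nonzero determinant --- or at least a nonzero specialization at some $t=t_0$ --- there is no proof here, only a correct outline of one.
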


Despite the theorem above states that the domination polynomial
satisfies no linear recurrence relation with the defined operations,
it is possible to defined a very useful recurrence relation using an
special class of oriented graphs obtained from the undirected ones.
This approach was introduced and applied in \cite{AroLl}. We
summarize the main definitions and results that will be used later
in this paper.

Consider the oriented graph $\Gamma =\left( U,A\right) $. We denote
by $N^{-}\left( x\right) $ and $N^{+}\left( x\right) $ the in- and
out-neighborhood of a vertex $x\in U,$ respectively, and
$N^{-}\left[ x \right] =N^{-}(x)\cup \{x\}$ and $N^{+}\left[
x\right] =N^{+}(x)\cup \{x\}$ the respective closed in- and
out-neighborhood of $x$. We say that $D\subseteq U$ is a
\textit{dominating set} of $\Gamma $ if for every $v\in U-D$ there
exists $u\in D$ such that $\left( u,v\right) \in A$, that is
$N^{-}\left[ v\right] \cap D\neq \emptyset .$ From this definition,
if $D$ is a dominating set of $\Gamma $, then there exits $u\in D$
such that $N^{+}(u)\neq \emptyset$. Similarly, the domination
polynomial of an oriented graph $\Gamma $ is defined as
\[
\gamma \left( \Gamma ,t\right) =\sum_{k=1}^{n}\gamma _{k}\left(
\Gamma \right) t^{n-k},
\]
where $\gamma _{k}\left( \Gamma \right) $ denotes the number of
dominating $k $-sets of $\Gamma $.

Let $\Gamma =\left( U_{1},U_{2},A\right) $ a bipartite one-way
oriented graph, that is, a bipartite oriented graph with partite
sets $U_{1}$ and $U_{2}$ such that every arc is oriented from a
vertex of $U_{1}$ to a vertex of $U_{2}$. Observe that if $D$ is a
dominating set of a bipartite one-way oriented graph $\Gamma ,$ then
$D\subseteq U_{1}$ by definition.

\begin{rem}
If $U_{1}=\emptyset $, then $\gamma \left( \Gamma ,t\right) =0$
(there is no
dominating set) and if $U_{2}=\emptyset $, then by convention we define $%
\gamma \left( \Gamma ,t\right) =\left( 1+t\right) ^{\left\vert
U_{1}\right\vert }$. \label{sp-cases}
\end{rem}

We recall that if $\Gamma _{1}$ and $\Gamma _{2}$ are bipartite
components (not necessarily connected) then the domination
polynomial is multiplicative respect to the components, that is
$\gamma \left( \Gamma ,t\right) =\gamma \left( \Gamma _{1},t\right)
\ \gamma \left( \Gamma _{2},t\right) $ (compare with the domination
polynomial (\ref{Union}) for the disjoint union of graphs). The
oriented graph operations $\Gamma -i$ and $\Gamma -N^{+}\left[
i\right]$ are analogously defined as the corresponding vertex
deletion and vertex extraction for graphs.

Let $G=(V,E)$ a simple graph. We construct a bipartite one-way
oriented graph $\Gamma _{G}=\left( U_{1},U_{2},A\right) $ from $G$
such that $U_{1}$
and $U_{2}$ are disjoint copies of $V$ and%
\[
A=\left\{ \left( i,i\right) :i\in V\right\} \cup \left\{ \left(
i,j\right) ,\left( j,i\right) :\left\{ i,j\right\} \in E\right\} .
\]

\begin{theorem}[(\cite{AroLl}, Lemma 3.5, Theorem 3.6)]
Let $G$ be a simple graph. Then
\begin{eqnarray}
\gamma \left( G,t\right) &=&\gamma \left( \Gamma _{G},t\right)
\text{ and}
\notag \\
\gamma \left( \Gamma ,t\right) &=&t\gamma \left( \Gamma -i,t\right)
+\gamma \left( \Gamma -N^{+}\left[ i\right] ,t\right)  \label{rec}
\end{eqnarray}
\noindent for every bipartite one-way oriented graph $\Gamma =\left(
U_{1},U_{2},A\right) $ and $i\in U_{1}$. \label{Thm-rec}
\end{theorem}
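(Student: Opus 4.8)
The plan is to establish the two assertions separately; in each case the content is a cardinality-preserving bijection between two families of vertex sets, together with a check that the exponents of $t$ match up. Throughout I use that a dominating set of a bipartite one-way oriented graph $\Gamma=(U_1,U_2,A)$ is a subset $D\subseteq U_1$ for which every vertex of $U_2$ has an in-neighbour in $D$. For the first equality, identify $U_1$ and $U_2$ with $V$ through the two canonical copies; the definition of the arc set of $\Gamma_G$ says precisely that, for $i\in U_1$, the out-neighbourhood $N^{+}(i)\subseteq U_2$ is the copy of $N_G[i]$, and dually that, for $v\in U_2$, the in-neighbourhood $N^{-}(v)\subseteq U_1$ is the copy of $N_G[v]$. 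Hence $D\subseteq U_1$ dominates $\Gamma_G$ if and only if $N_G[v]\cap D\neq\emptyset$ for every $v\in V$; and since for graphs the condition ``$N_G[y]\cap D\neq\emptyset$ for all $y\in V-D$'' is equivalent to ``$N_G[y]\cap D\neq\emptyset$ for all $y\in V$'' (the case $y\in D$ being vacuous), this says exactly that $D$ dominates $G$. The correspondence preserves cardinality and $|U_1|=|V|=n$, so $\gamma_k(\Gamma_G)=\gamma_k(G)$ for every $k$, and the two polynomials coincide.

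For the recurrence, fix $i\in U_1$ and split the dominating sets of $\Gamma$ (all of which are subsets of $U_1$) according to whether $i\notin D$ or $i\in D$. If $i\notin D$, then $D\subseteq U_1\setminus\{i\}$, and since $i$ is irrelevant to the domination of $U_2$, such a $D$ dominates $\Gamma$ if and only if it is a dominating set of $\Gamma-i$: deleting $i$ removes only the arcs out of $i$ and leaves $U_2$ together with every arc out of $U_1\setminus\{i\}$ intact. A set of size $k$ in this family contributes $t^{|U_1|-k}$ to $\gamma(\Gamma,t)$ but $t^{(|U_1|-1)-k}$ to $\gamma(\Gamma-i,t)$, so altogether these sets account for the summand $t\,\gamma(\Gamma-i,t)$.

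If $i\in D$, write $D=\{i\}\cup D'$ with $D'\subseteq U_1\setminus\{i\}$. The vertex $i$ on its own dominates exactly $N^{+}(i)$ and no other vertex of $U_2$, and moreover every in-neighbour of a vertex of $U_2\setminus N^{+}(i)$ already lies in $U_1\setminus\{i\}$; consequently $D$ dominates $\Gamma$ if and only if $D'$ dominates every vertex of $U_2\setminus N^{+}(i)$. But the extracted graph $\Gamma-N^{+}[i]$ is exactly the bipartite one-way oriented graph with partite sets $U_1\setminus\{i\}$ and $U_2\setminus N^{+}(i)$ and the induced arcs --- and here it is essential that no arc is lost in passing to it, which holds because the surviving $U_2$-vertices are by definition not out-neighbours of $i$ --- so the last condition says precisely that $D'$ is a dominating set of $\Gamma-N^{+}[i]$. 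Since $|D|=|D'|+1$ while the first partite set has lost one vertex, $t^{|U_1|-|D|}=t^{(|U_1|-1)-|D'|}$, so this family contributes exactly $\gamma(\Gamma-N^{+}[i],t)$; adding the two contributions gives the stated recurrence. I do not expect a genuine obstacle here: the argument is essentially bookkeeping, and its one real idea is the identification of $\Gamma-N^{+}[i]$ with ``what is left to be dominated once $i$ has been committed to the dominating set.'' The only other thing to monitor is the degenerate situations --- for instance $N^{+}(i)=\emptyset$, where $\Gamma-N^{+}[i]=\Gamma-i$ and the recurrence collapses to $\gamma(\Gamma,t)=(1+t)\,\gamma(\Gamma-i,t)$, or the case of an empty partite set --- where one checks compatibility with the conventions of Remark~\ref{sp-cases}; these verifications are routine.
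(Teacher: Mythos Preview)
Your argument is correct. The paper does not actually supply a proof of this theorem; it is imported verbatim from \cite{AroLl} (Lemma~3.5 and Theorem~3.6), so there is no in-paper proof to compare against. That said, your reconstruction matches the standard argument one expects for this kind of deletion--contraction style recurrence: the first equality is just the observation that the arc set of $\Gamma_G$ is engineered so that $N^{-}_{\Gamma_G}(v)=N_G[v]$, and the second is the case split on $i\in D$ versus $i\notin D$, with the vertex extraction $\Gamma-N^{+}[i]$ recording exactly the residual domination problem once $i$ is forced in. Your bookkeeping with the exponents (the drop of one in $|U_1|$ matching either the factor $t$ or the drop of one in $|D|$) is the point where errors typically creep in, and you have it right. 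The only thing you might tighten in a final write-up is to make explicit, once and for all, that for bipartite one-way oriented graphs the polynomial is normalized as $\sum_k \gamma_k(\Gamma)\,t^{|U_1|-k}$ rather than $t^{|U|-k}$; you use this implicitly and it is consistent with Remark~\ref{sp-cases}, but the paper is not entirely explicit about it either.
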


\begin{cor}
$\gamma \left( G,t\right) =\gamma \left( \Gamma _{G},t\right)
=t\gamma \left( \Gamma _{G}-i,t\right) +\gamma \left( \Gamma
_{G}-N^{+}\left[ i\right] ,t\right) $ for every simple graph $G$.
\end{cor}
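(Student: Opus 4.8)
The statement is an immediate specialization of the cited result (\cite{AroLl}, Lemma 3.5 and Theorem 3.6), so the plan is simply to check that its two parts apply to $\Gamma_G$ and then chain them. First I would invoke the first identity of that theorem, $\gamma(G,t) = \gamma(\Gamma_G,t)$, which already appears as the displayed equation just above the corollary. This disposes of the left-hand equality with no further work.

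Next I would verify that $\Gamma_G$ is legitimately a bipartite one-way oriented graph, since this is the hypothesis needed to apply the recurrence \eqref{rec}. By construction $\Gamma_G = (U_1, U_2, A)$ with $U_1$ and $U_2$ disjoint copies of $V$, and every arc in $A$ — whether of the form $(i,i)$ or of the form $(i,j),(j,i)$ for $\{i,j\}\in E$ — is read as going from the copy of its tail in $U_1$ to the copy of its head in $U_2$. Hence all arcs are oriented from $U_1$ to $U_2$, so $\Gamma_G$ is bipartite one-way as required.

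With this in hand, I would apply \eqref{rec} to $\Gamma = \Gamma_G$ and any chosen $i \in U_1$ (which exists whenever $G$ is nonempty), obtaining
\[
\gamma(\Gamma_G,t) = t\,\gamma(\Gamma_G - i, t) + \gamma(\Gamma_G - N^+[i], t),
\]
and combine this with $\gamma(G,t) = \gamma(\Gamma_G,t)$ to conclude. The only edge case is $G = \Phi$: then $U_1 = \emptyset$, no vertex $i$ can be selected, but $\gamma(G,t) = \gamma(\Gamma_G,t) = 0$ makes the claimed identity vacuously or trivially true (or one simply restricts to graphs with at least one vertex, as is implicit elsewhere in the paper). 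There is no real obstacle here; the mild point worth stating carefully is the one-way orientation check, so that the reader sees the hypothesis of the cited theorem is genuinely met by $\Gamma_G$.
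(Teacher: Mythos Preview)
Your proposal is correct and matches the paper's intent: the corollary is stated without proof, as an immediate specialization of the preceding theorem (the two displayed identities of Theorem~\ref{Thm-rec}), and your argument simply spells out that $\Gamma_G$ is indeed bipartite one-way so that \eqref{rec} applies. The only extra content you add is the harmless edge-case discussion for $G=\Phi$, which the paper omits.
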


In this paper, we solve the following open problem posed at the
Problem Session of CID 2015 by S. Alikhani:

\begin{prb}
Let $P_{n}$ be the path with $n$ vertices. Find an explicit formula
for $\gamma _{k}(P_{n})$, where $k$ and $n$ are positive integers.
\label{Prob-Ali}
\end{prb}

For this purpose, in Section 2 we give a much shorter proof for the
recurrence relation involving the domination polynomials of paths
proved in \cite{AP}.  Using similar tools, we can show the
recurrence relation for the polynomial of cycles. As a consequence,
we give the domination polynomial of wheels. There are two
corollaries following the respective theorems for the recurrence
relation involving the numbers $\gamma _{k}(P_{n})$ and $\gamma
_{k}(C_{n})$. The number of dominating $k$-sets $\gamma _{k}(W_{n})$
for the wheel with $n$ vertices is a consequence of its domination
polynomial depending on the domination polynomial of a cycle with
$n-1$ vertices. In Section 3 we give the explicit formulas for the
number of the dominating $k$-sets of paths, cycles and wheels using
ordinary generating functions of two variables. In particular,
Theorem \ref{Sol} is a solution to Problem \ref{Prob-Ali}.

For the terminology on graph an digraphs used in what follows, see
\cite{BJ-G}.

\section{The domination polynomial of paths, cycles and wheels}

Let $n$ be a positive integer, $[n]=\{1,...,n\}$ and $P_{n}$ the
path with vertex set $[n]$. In \cite{AP}, the following theorem is
proved.

\begin{theorem}[(\cite{AP}, Theorem 3.1)] \label{Ali}
Let $n$ and $k$ be positive integers. Then for every $n\geq 4$ and
$k\geq 2$
\begin{enumerate}
\item[(i)] $\gamma _{k}(P_{n})=\gamma _{k-1}(P_{n-1})+\gamma
_{k-1}(P_{n-2})+\gamma _{k-1}(P_{n-3})$ \textit{and}
\item[(ii)] $D(P_{n},t)=t\ \left[ D(P_{n-1},t)+D(P_{n-2},t)+D(P_{n-3},t)%
\right] $ \textit{with initial conditions } $D(P_{1},t)=t$,
$D(P_{2},t)=t^{2}+2t$ and $D(P_{3},t)=t^{3}+3t^{2}+t$.
\end{enumerate}
\end{theorem}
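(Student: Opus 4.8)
The plan is to use the oriented-graph recurrence (\ref{rec}) from Theorem \ref{Thm-rec} applied to the oriented graph $\Gamma_{P_n}$, picking a vertex at the end of the path so that both derived graphs are again (up to isolated vertices and relabeling) of the form $\Gamma_{P_m}$ for smaller $m$. Since part (i) and part (ii) are equivalent by extracting the coefficient of $t^{n-k}$ (recall $\gamma(P_n,t)=\sum_k \gamma_k(P_n)t^{n-k}$, and note the index shift: one power of $t$ is gained on the left because $P_n$ has one more vertex than $P_{n-1}$, which accounts for the drop from $k$ to $k-1$), it suffices to prove the polynomial identity. I would actually prove it in the $\gamma(\cdot,t)$ normalization and then translate to $D(\cdot,t)=t^nD(\cdot,1/t)$ at the very end, since Theorem \ref{Thm-rec} is stated for $\gamma$.

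\textbf{Main computation.} Label the path $P_n$ with vertices $1,2,\dots,n$ in order, and let $\Gamma=\Gamma_{P_n}=(U_1,U_2,A)$ be the associated bipartite one-way oriented graph, with $U_1=\{1,\dots,n\}$ and $U_2=\{1',\dots,n'\}$ and arcs $(i,i')$ for all $i$ together with $(i,j'),(j,i')$ whenever $\{i,j\}\in E(P_n)$. Apply the recurrence (\ref{rec}) at the endpoint $i=1$:
\[
\gamma(\Gamma,t)=t\,\gamma(\Gamma-1,t)+\gamma(\Gamma-N^+[1],t).
\]
For the first term, deleting vertex $1\in U_1$ leaves a graph whose $U_1$-side is $\{2,\dots,n\}$; the vertex $1'\in U_2$ now has in-degree zero but this does not affect domination counts in a one-way oriented graph except through its mere presence, and in fact $\Gamma-1$ decomposes as the oriented graph on $\{2,\dots,n\}$ (which is precisely $\Gamma_{P_{n-1}}$ on the sub-path $2$--$\cdots$--$n$) together with the isolated sink $1'$; by Remark \ref{sp-cases} and multiplicativity over components, the isolated $U_2$-vertex contributes a factor $1$, so $\gamma(\Gamma-1,t)=\gamma(P_{n-1},t)$. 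For the second term, $N^+[1]=\{1,1',2'\}$ (out-neighbors of $1$ are $1'$ from the loop-arc and $2'$ from the edge $\{1,2\}$), so $\Gamma-N^+[1]$ has $U_1$-side $\{2,3,\dots,n\}$ and $U_2$-side $\{3',\dots,n'\}$; here vertex $2\in U_1$ retains only the arc $(2,3')$ coming from the edge $\{2,3\}$, so it behaves exactly as an endpoint of the path $3$--$\cdots$--$n$ \emph{with vertex $2$ attached}. One checks that $\Gamma-N^+[1]$ is isomorphic to $\Gamma_{P_{n-2}}$ on the sub-path $3$--$\cdots$--$n$ \emph{except} that the endpoint $3$ has an extra "private" dominator-only vertex $2$; unwinding this, $\gamma(\Gamma-N^+[1],t)$ splits as the required combination. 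The cleanest route is to apply (\ref{rec}) a second time, now at vertex $2\in U_1$ inside $\Gamma-N^+[1]$: the deletion term gives $t\cdot\gamma(P_{n-2},t)$-type contribution and the extraction term ($N^+[2]=\{2,3'\}$ in this subgraph) gives a graph isomorphic to $\Gamma_{P_{n-3}}$ on $4$--$\cdots$--$n$ together with an isolated $U_1$-vertex $3$, contributing $(1+t)\gamma(P_{n-3},t)$; careful bookkeeping of the powers of $t$ collapses everything to $\gamma(P_n,t)=t[\gamma(P_{n-1},t)+\gamma(P_{n-2},t)+\gamma(P_{n-3},t)]$.

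\textbf{Finishing and base cases.} Translating via $D(G,t)=t^{|V(G)|}D(G,1/t)$, equivalently $\gamma(G,t)=t^{|V(G)|}D(G,1/t)$, the factor $t$ on the right of the $\gamma$-recurrence is absorbed by the differing vertex counts and yields exactly $D(P_n,t)=t[D(P_{n-1},t)+D(P_{n-2},t)+D(P_{n-3},t)]$ for $n\ge4$; extracting coefficients gives (i) for $n\ge4$, $k\ge2$. The initial conditions $D(P_1,t)=t$, $D(P_2,t)=t^2+2t$, $D(P_3,t)=t^3+3t^2+t$ are verified by direct enumeration: for $P_3=a$--$b$--$c$, the dominating sets are $\{b\}$; all three $2$-sets; and the full set — giving $t+3t^2+t^3$.

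\textbf{Expected main obstacle.} The delicate point is the precise identification of $\Gamma-N^+[1]$: after extracting $N^+[1]$, the vertex $2\in U_1$ survives but several of the arcs incident to the corresponding $2'\in U_2$ are gone, so the leftover graph is \emph{not} literally $\Gamma_{P_{n-2}}$ — it is $\Gamma_{P_{n-1}}$ on $2$--$\cdots$--$n$ with one endpoint "capped". Getting the reduction right — either by the second application of (\ref{rec}) sketched above, or by directly recognizing the capped graph's domination polynomial as $t^{-1}[\gamma(P_{n-2},t)+\gamma(P_{n-3},t)]\cdot t$ up to the isolated-vertex conventions of Remark \ref{sp-cases} — is the one place where a sign or a stray factor of $(1+t)$ can creep in, and it is worth writing out the two-step reduction explicitly rather than appealing to a picture.
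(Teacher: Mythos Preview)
Your reduction contains a genuine error that breaks the argument. When you delete $1\in U_1$ from $\Gamma_{P_n}$, the vertex $1'\in U_2$ is \emph{not} an isolated sink: the arc $(2,1')$, coming from the edge $\{1,2\}$, survives. Hence $\Gamma-1$ is not $\Gamma_{P_{n-1}}$ plus an isolated $U_2$-vertex; it is (up to relabelling) exactly the auxiliary graph $J_{n-1,n}$ that the paper introduces, and every dominating set of it must contain $2$. (Incidentally, even if $1'$ \emph{were} isolated, Remark~\ref{sp-cases} would give that component $\gamma=0$, not $1$, since its $U_1$-part is empty.) The same slip recurs in your second application: after removing $N^+[1]$ and then $N^+[2]$, vertex $3\in U_1$ is not isolated --- the arc $(3,4')$ is still there --- so the extraction term is not $\Gamma_{P_{n-3}}$ with an isolated $U_1$-vertex.

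These misidentifications lead you to the relation $\gamma(P_n,t)=t\bigl[\gamma(P_{n-1},t)+\gamma(P_{n-2},t)+\gamma(P_{n-3},t)\bigr]$, which is false: for $n=4$ the left side is $1+4t+4t^2$ while the right side is $3t+5t^2+t^3$. The correct $\gamma$-recurrence is $\gamma(P_n,t)=\gamma(P_{n-1},t)+t\,\gamma(P_{n-2},t)+t^{2}\gamma(P_{n-3},t)$, and it is \emph{this} identity that transforms into $D(P_n,t)=t\bigl[D(P_{n-1},t)+D(P_{n-2},t)+D(P_{n-3},t)\bigr]$ under $D(G,t)=t^{|V(G)|}\gamma(G,1/t)$. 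The paper handles this by naming the two ``capped'' graphs $I_{m+1,m}$ and $J_{m,m+1}$ that actually arise, deriving a three-term recurrence for $\gamma(I_{n,n-1},t)$, and then expressing $\gamma(P_n,t)$ in terms of the $I$-polynomials; your own ``Expected main obstacle'' paragraph correctly anticipates the difficulty but your computation does not resolve it.
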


The long proof of this theorem is based on six lemmas and Theorem
2.7. We give a much shorter proof of this theorem. For this aim, we
define bipartite one-way oriented graphs $I_{m+1,m}$ and $J_{m,m+1}$
for every positive integer $m.$ First, observe that $\Gamma
_{P_{n}}$ is given by
$U_{1}(\Gamma _{P_{n}})=U_{2}(\Gamma _{P_{n}})=[n]$ and%
\[
A(\Gamma _{P_{n}})=\left\{ \left( i,i\right) :i\in \lbrack
n]\right\} \cup \left\{ \left( i,i+1\right) ,(i+1,i):i\in \lbrack
n-1]\right\} .
\]

Let $U_{1}(I_{m+1,m})=[m+1]$, $U_{2}(I_{m+1,m})=[m]$ and
$$A(I_{m+1,m})=A(\Gamma _{P_{m}})\cup \{(m+1,m)\}.$$
Similarly,
$U_{1}(J_{m,m+1})=[m]$, $U_{2}(J_{m,m+1})=[m+1]$ and
$$A(J_{m,m+1})=A(\Gamma _{P_{m}})\cup \{(m,m+1)\}.$$

\begin{theorem}
Let $n$ be a positive integer. Then
\begin{equation*}
\gamma (P_{n},t)=\gamma (P_{n-1},t)+t\ \gamma
(P_{n-2},t)+t^{2}\gamma (P_{n-3},t)
\end{equation*}
\textit{for every} $n\geq 4$ \textit{with initial conditions}
$\gamma (P_{1},t) =1$, $\gamma (P_{2},t) =1+2t$ and $\gamma
(P_{3},t) =1+3t+t^{2}$.
\end{theorem}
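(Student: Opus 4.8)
The plan is to use the recurrence relation \eqref{rec} from Theorem \ref{Thm-rec} repeatedly, choosing at each step the vertex to delete/extract so that the resulting bipartite one-way oriented graphs telescope into the oriented graphs $\Gamma_{P_{n-1}}$, $I_{n-2,n-3}$-type objects, and so on. Concretely, I would start from $\Gamma_{P_n}$ and apply \eqref{rec} at the vertex $i=n\in U_1$. The term $\Gamma_{P_n}-n$ removes the copy of $n$ from $U_1$ only, leaving $U_2$ still containing $n$ as a sink that can only be dominated by $n-1$; by the definition of $J_{m,m+1}$ this is exactly $J_{n-1,n}$. The term $\Gamma_{P_n}-N^+[n]$ extracts $n$ together with its out-neighbors $\{n-1,n\}\subseteq U_2$, and after deleting those we are left with $\Gamma_{P_{n-1}}$ on vertex set $[n-1]$ together with possibly an isolated leftover; one checks this equals $\Gamma_{P_{n-2}}$ glued appropriately — I would track the exact identification carefully. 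So \eqref{rec} gives $\gamma(\Gamma_{P_n},t)=t\,\gamma(J_{n-1,n},t)+\gamma(\text{something}_{n-1},t)$.

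The key auxiliary computation is to find recurrences relating $\gamma(I_{m+1,m},t)$ and $\gamma(J_{m,m+1},t)$ back to $\gamma(P_m,t)=\gamma(\Gamma_{P_m},t)$. For $J_{m,m+1}$, apply \eqref{rec} at $i=m\in U_1$: deleting $m$ from $U_1$ leaves the vertex $m+1\in U_2$ undominated (nothing points to it), so that term vanishes by Remark \ref{sp-cases} (the $U_1$-side no longer dominates $U_2$), contributing $0$; extracting $N^+[m]=\{m,m+1\}\cap U_2=\{m,m+1\}$ removes $m$ and $m+1$ from $U_2$ and $m$ from $U_1$, leaving $\Gamma_{P_{m-1}}$ on $[m-1]$, so $\gamma(J_{m,m+1},t)=\gamma(\Gamma_{P_{m-1}},t)=\gamma(P_{m-1},t)$. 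Wait — I would instead choose $i=m$ for the extraction and double-check whether the deletion term is genuinely zero or reduces to $t\,\gamma(J_{m-1,m},t)$; the honest bookkeeping here is the crux, and the symmetric statement $\gamma(I_{m+1,m},t)=\gamma(P_m,t)+t\,\gamma(P_{m-1},t)$ or similar will drop out the same way. Once I have $\gamma(J_{n-1,n},t)$ and the second term of the first expansion each expressed via domination polynomials of shorter paths, I substitute and collect: the expected outcome is $\gamma(P_n,t)=\gamma(P_{n-1},t)+t\,\gamma(P_{n-2},t)+t^2\,\gamma(P_{n-3},t)$.

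The main obstacle I anticipate is purely combinatorial bookkeeping: correctly identifying, up to the labeling of $U_1$ and $U_2$ and up to isolated vertices of $U_2$ (which by Remark \ref{sp-cases} contribute a factor $1+t$ per vertex only if they lie in $U_1$, and nothing — indeed force the whole polynomial to $0$ — if an undominated vertex survives in $U_2$), which of the intermediate oriented graphs is $\Gamma_{P_{n-j}}$, which is $I_{\cdot,\cdot}$, and which is $J_{\cdot,\cdot}$. A single mis-assignment of an endpoint arc flips a $J$ into an $I$ or introduces a spurious undominated sink, which would kill the identity. To keep this controlled I would draw the arc sets explicitly for small $n$ (say $n=4,5$) to pin down the pattern, then argue the general step by exhibiting the arc-set identities $A(\Gamma_{P_n})-n$, $A(\Gamma_{P_n})\setminus N^+[n]$, etc., symbolically. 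The initial conditions $\gamma(P_1,t)=1$, $\gamma(P_2,t)=1+2t$, $\gamma(P_3,t)=1+3t+t^2$ are checked directly by enumerating dominating sets (for $P_3$: the $1$-sets $\{2\}$; the $2$-sets, all $\binom{3}{2}=3$ of them; the $3$-set; note $\{1\},\{3\}$ are not dominating, giving $1+3t+t^2$), and then the recurrence propagates them. Finally, the change of variable $\gamma(P_n,t)=t^n D(P_n,1/t)$ recovers part (ii) of Theorem \ref{Ali}, and comparing coefficients of $t^{n-k}$ on both sides of the displayed recurrence yields part (i), so the shorter proof of Theorem \ref{Ali} follows as an immediate corollary.
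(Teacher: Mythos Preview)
Your plan is exactly the paper's: apply \eqref{rec} at the top vertex of $U_1$ and track which of $\Gamma_{P_m}$, $I_{m+1,m}$, $J_{m,m+1}$ results, iterating until the three-term recurrence falls out. The outline is sound, but two of your explicit identifications are wrong and would break the computation if left as written. In $J_{m,m+1}$ the vertex $m\in U_1$ still carries the arc $(m,m-1)$ inherited from $\Gamma_{P_m}$, so $N^{+}(m)=\{m-1,m,m+1\}$, not $\{m,m+1\}$; the extraction $J_{m,m+1}-N^{+}[m]$ therefore gives $I_{m-1,m-2}$, not $\Gamma_{P_{m-1}}$, and hence $\gamma(J_{n-1,n},t)=\gamma(I_{n-2,n-3},t)$ rather than $\gamma(P_{n-2},t)$. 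Likewise $\Gamma_{P_n}-N^{+}[n]=I_{n-1,n-2}$ (the surviving vertex $n-1\in U_1$ keeps its arc to $n-2\in U_2$), and applying \eqref{rec} to $I_{m+1,m}$ at vertex $m+1$ yields $t\,\gamma(P_m,t)+\gamma(I_{m,m-1},t)$, with an $I$-term persisting on the right rather than your guessed $\gamma(P_m,t)+t\,\gamma(P_{m-1},t)$. Because of this the paper does not try to eliminate the $I$'s immediately: it first records $\gamma(P_n,t)=\gamma(I_{n-1,n-2},t)+t\,\gamma(I_{n-2,n-3},t)$ and the auxiliary recurrence $\gamma(I_{n,n-1},t)=\gamma(I_{n-1,n-2},t)+t\,\gamma(I_{n-2,n-3},t)+t^{2}\gamma(I_{n-3,n-4},t)$, then substitutes the latter into the former and regroups. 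Once you correct the out-neighborhoods, your plan becomes exactly this.
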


\begin{proof}
Let $n\geq 4$. We apply recurrence relation (\ref{rec}) of Theorem
\ref{Thm-rec} to the domination polynomial $\gamma (\Gamma
_{P_{n}},t)$ and $\gamma (I_{n,n-1},t)$ using vertex $n$ of
$U_{1}(\Gamma _{P_{n}})$ and $U_{1}(I_{n,n-1})$, respectively. We
obtain that
\begin{eqnarray*}
\gamma (\Gamma _{P_{n}},t) &=&t\ \gamma (J_{n-1,n},t)+\gamma (I_{n-1,n-2},t) \text{ and} \\
\gamma (I_{n,n-1},t) &=&t\ \gamma (\Gamma
_{P_{n-1}},t)+\gamma(I_{n-1,n-2},t).
\end{eqnarray*}
Applying again (\ref{rec}) of Theorem \ref{Thm-rec} to $\gamma
(J_{n-1,n},t)$ using vertex $n-1$ of $U_{1}(J_{n-1,n})$, it follows
that
\begin{equation*}
\gamma (J_{n-1,n},t)=t\ \gamma (J_{n-2,n-1}\cup J_{0,1},t)+\gamma
(I_{n-2,n-3},t).
\end{equation*}%
Since
\begin{equation*}
\gamma (J_{n-2,n-1}\cup J_{0,1},t)=\gamma (J_{n-2,n-1},t)\ \gamma
(J_{0,1},t)
\end{equation*}
and by Remark \ref{sp-cases}, $\gamma (J_{0,1},t)=0$
($U_{1}(J_{0,1})=\emptyset )$, we have that $\gamma (J_{n-2,n-1}\cup
J_{0,1},t)=0$ and hence $\gamma (J_{n-1,n},t)=\gamma
(I_{n-2,n-3},t).$ Then
\begin{eqnarray*}
\gamma (\Gamma _{P_{n}},t) &=&\gamma (I_{n-1,n-2},t)+t\ \gamma
(I_{n-2,n-3},t)\text{ and} \\
\gamma (I_{n,n-1},t) &=&t\ \left[ \gamma (I_{n-2,n-3},t)+t\ \gamma
(I_{n-3,n-4},t)\right] +\gamma (I_{n-1,n-2},t) \\
&=&\gamma (I_{n-1,n-2},t)+t\ \gamma (I_{n-2,n-3},t)+t^{2}\gamma
(I_{n-3,n-4},t).
\end{eqnarray*}
Since $n\geq 4$, the last recurrence relation for $\gamma
(I_{n,n-1},t)$ is true with initial conditions $\gamma
(I_{1,0},t)=1+t$ (by Remark \ref{sp-cases}), $\gamma
(I_{2,1},t)=1+2t$ and $\gamma (I_{3,2},t)=1+3t+2t^{2}$. Finally,
\begin{eqnarray*}
\gamma (P_{n},t) &=&\gamma (\Gamma _{P_{n}},t)=\gamma
(I_{n-1,n-2},t)+t\
\gamma (I_{n-2,n-3},t) \\
&=&\gamma (I_{n-2,n-3},t)+t\ \gamma (I_{n-3,n-4},t)+t^{2}\gamma
(I_{n-4,n-5},t) \\
&&+t\left[ \gamma (I_{n-3,n-4},t)+t\ \gamma
(I_{n-4,n-5},t)+t^{2}\gamma
(I_{n-5,n-6},t)\right] \\
&=&\gamma (I_{n-2,n-3},t)+t\ \gamma (I_{n-3,n-4},t)+t\left[ \gamma
(I_{n-3,n-4},t)+t\ \gamma (I_{n-4,n-5},t)\right] \\
&&+t^{2}\left[ \gamma (I_{n-4,n-5},t)+t\ \gamma (I_{n-5,n-6},t)\right] \\
&=&\gamma (\Gamma _{P_{n-1}},t)+t\ \gamma (\Gamma
_{P_{n-2}},t)+t^{2}\gamma
(\Gamma _{P_{n-3}},t) \\
&=&\gamma (P_{n-1},t)+t\ \gamma (P_{n-2},t)+t^{2}\gamma (P_{n-3},t),
\end{eqnarray*}%
which proves the theorem.
\end{proof}

Recalling that $\gamma \left( P_{n},t\right)
=t^{n}D(P_{n},\frac{1}{t})$, we obtain the recurrence relation (ii)
of Theorem \ref{Ali}.

From this theorem and using the definition of the domination
polynomial we have that
\begin{eqnarray*}
\sum_{k=1}^{n}\gamma _{k}\left( P_{n}\right) t^{n-k}
&=&\sum_{k=1}^{n-1}\gamma _{k}\left( P_{n-1}\right)
t^{n-1-k}+t\sum_{k=1}^{n-2}\gamma _{k}\left( P_{n-2}\right) t^{n-2-k} \\
&&+t^{2}\sum_{k=1}^{n-3}\gamma _{k}\left( P_{n-3}\right) t^{n-3-k}
\end{eqnarray*}
\begin{equation*}
=\sum_{k=1}^{n-1}\gamma _{k}\left( P_{n-1}\right)
t^{n-1-k}+\sum_{k=1}^{n-2}\gamma _{k}\left( P_{n-2}\right)
t^{n-1-k}+\sum_{k=1}^{n-3}\gamma _{k}\left( P_{n-3}\right) t^{n-1-k}
\end{equation*}
\begin{eqnarray*}
&=&\sum_{k=2}^{n-2}\left[ \gamma _{k-1}(P_{n-1})+\gamma
_{k-1}(P_{n-2})+\gamma _{k-1}(P_{n-3})\right] t^{n-k} \\
&&+\gamma _{n-2}(P_{n-1})\ t+\gamma _{n-1}(P_{n-1})+\gamma
_{n-2}(P_{n-2})t
\end{eqnarray*}
\begin{equation*}
=\sum_{k=2}^{n-2}\left[ \gamma _{k-1}(P_{n-1})+\gamma
_{k-1}(P_{n-2})+\gamma _{k-1}(P_{n-3})\right] t^{n-k}+n\ t+1
\end{equation*}
and therefore, we have the recurrence relation (i) of Theorem
\ref{Ali}.

\begin{cor} \label{cor-paths}
$\gamma _{k}(P_{n})=\gamma _{k-1}(P_{n-1})+\gamma
_{k-1}(P_{n-2})+\gamma _{k-1}(P_{n-3})$ for every $n\geq 4$ and $2
\leq k \leq n-2$ with initial conditions $\gamma _{1}(P_{1})=1$,
$\gamma _{1}(P_{2})=2$, $\gamma _{1}(P_{3})=1$, $\gamma
_{2}(P_{2})=1$ and $\gamma _{2}(P_{3})=3$. \label{rec-gamma}
\end{cor}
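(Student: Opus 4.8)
The plan is to deduce the stated three-term recurrence for the individual coefficients $\gamma_k(P_n)$ from the polynomial identity
\[
\gamma(P_n,t)=\gamma(P_{n-1},t)+t\,\gamma(P_{n-2},t)+t^2\gamma(P_{n-3},t)
\]
proved in the previous theorem, and then to verify the five initial conditions by direct enumeration on $P_1,P_2,P_3$. In fact the coefficient comparison is exactly the block of displayed equalities preceding the corollary; what remains is to record which coefficients it actually controls and to settle the base cases.

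For the recurrence, I would substitute the defining expansions $\gamma(P_m,t)=\sum_{j=1}^{m}\gamma_j(P_m)\,t^{m-j}$ into both sides of the identity. The right-hand side becomes
\[
\sum_{j=1}^{n-1}\gamma_j(P_{n-1})\,t^{n-1-j}
+\sum_{j=1}^{n-2}\gamma_j(P_{n-2})\,t^{n-1-j}
+\sum_{j=1}^{n-3}\gamma_j(P_{n-3})\,t^{n-1-j},
\]
so that all three sums carry the common power $t^{n-1-j}$; reindexing by $k=j+1$ writes each summand as a multiple of $t^{n-k}$. Comparing the coefficient of $t^{n-k}$ on the two sides, one sees that for $k$ in the range $2\le k\le n-2$ all three sums contribute precisely their term of index $j=k-1$ (because then $1\le k-1\le n-3$), while the left-hand side contributes $\gamma_k(P_n)$; this gives exactly $\gamma_k(P_n)=\gamma_{k-1}(P_{n-1})+\gamma_{k-1}(P_{n-2})+\gamma_{k-1}(P_{n-3})$. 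The hypothesis $2\le k\le n-2$ is precisely the condition ensuring that none of the three summation ranges is exceeded, so no boundary corrections appear; the coefficients left out of the claim ($k=1$ and $k\in\{n-1,n\}$) are exactly the ones assembled into the extra term $n\,t+1$ in the displayed computation.

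For the initial conditions I would simply enumerate. In $P_1$ the only dominating set is the single vertex, so $\gamma_1(P_1)=1$. In $P_2$ each vertex dominates the other, whence $\gamma_1(P_2)=2$, and the full vertex set is the unique dominating $2$-set, so $\gamma_2(P_2)=1$. In $P_3=v_1v_2v_3$ the only dominating $1$-set is $\{v_2\}$, so $\gamma_1(P_3)=1$, while all $\binom{3}{2}=3$ pairs are dominating, so $\gamma_2(P_3)=3$.

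There is essentially no real obstacle: once the polynomial recurrence of the previous theorem is in hand, the statement is a matter of extracting coefficients. The only point demanding a little care is the bookkeeping of the summation bounds, namely confirming that the coefficient comparison is valid exactly for $2\le k\le n-2$ and that the terms falling outside this window are precisely those absent from the corollary.
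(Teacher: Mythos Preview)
Your argument is correct and follows essentially the same route as the paper: the paper derives the corollary by substituting the expansions $\gamma(P_m,t)=\sum_j\gamma_j(P_m)t^{m-j}$ into the polynomial recurrence, reindexing, and reading off the coefficient of $t^{n-k}$ for $2\le k\le n-2$, with the leftover terms assembling into $n\,t+1$. Your additional direct enumeration of the initial conditions is a welcome check, though in the paper these are implicit in the stated initial polynomials $\gamma(P_1,t)=1$, $\gamma(P_2,t)=1+2t$, $\gamma(P_3,t)=1+3t+t^2$.
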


Notice that $\gamma _{n-1}(P_{n})=n$ and $\gamma _{n}(P_{n})=1$.
\bigskip

Let $C_{n}$ be the cycle with $n$ vertices. We define $C_{1}=K_{1}$,
the cycle with one vertex, and $C_{2}$ is the so called
\textit{dicycle} (two vertices joined by two parallel edges). The
domination polynomials are $\gamma (C_{1},t)=1$ and $\gamma
(C_{2},t)=1+2t$, respectively. If we appropriately apply recurrence
relation (\ref{rec}) of Theorem \ref{Thm-rec} to the domination
polynomial $\gamma (\Gamma _{C_{n}},t)$ and define suitable
bipartite one-way oriented graphs, then it can be analogously proved
the following result for the domination polynomial of cycles (see
Theorem 4.5 of \cite{AP-cycles})

\begin{theorem}
Let $n$ be a positive integer. Then
\begin{equation*}
\gamma (C_{n},t)=\gamma (C_{n-1},t)+t \gamma (C_{n-2},t)+t^{2}\gamma
(C_{n-3},t)
\end{equation*}
for every $n\geq 4$ \textit{with initial conditions} $\gamma
(C_{1},t) =1$, $\gamma (C_{2},t) =1+2t$ and $\gamma (C_{3},t)
=1+3t+3t^{2}$.
\end{theorem}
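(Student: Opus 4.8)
The plan is to mimic the proof given for paths, transferring the argument to $\Gamma_{C_n}$. First I would record the structure of $\Gamma_{C_n}$: $U_1(\Gamma_{C_n})=U_2(\Gamma_{C_n})=[n]$ and $A(\Gamma_{C_n})=\{(i,i):i\in[n]\}\cup\{(i,i+1),(i+1,i):i\in[n-1]\}\cup\{(1,n),(n,1)\}$, i.e.\ $\Gamma_{P_n}$ with the extra pair of arcs closing the cycle. Then I would introduce the auxiliary one-way oriented graphs needed to express what happens when we delete or extract a vertex of a cycle: extracting a vertex $i\in U_1$ via $\Gamma-N^+[i]$ removes $i$ and its two cycle-neighbours from $U_2$, leaving a path-like structure on the remaining vertices, while deleting $i$ via $\Gamma-i$ turns the cycle into a path with a dangling in-arc at each former neighbour of $i$. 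The key is to define graphs $I_{m+1,m}$ and $J_{m,m+1}$ exactly as in the path proof (they already appear in the excerpt) together with, if needed, a ``double-tailed path'' $K_{m+2,m}$ obtained from $\Gamma_{P_m}$ by adding arcs $(m+1,1)$ and $(m+2,m)$, and to show these auxiliary polynomials all satisfy the same three-term recurrence with a shift of index.

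The main computation would go as follows. Apply \eqref{rec} to $\gamma(\Gamma_{C_n},t)$ at vertex $n\in U_1$. Extraction $\Gamma_{C_n}-N^+[n]$ deletes $n-1,n,1$ from $U_2$ and $n$ from $U_1$, which I claim is (isomorphic to) $I_{n-1,n-3}$-type graph on the path $2,\dots,n-1$ — more precisely a path $P_{n-3}$ in $U_2$ with two extra in-arcs, hence a graph whose polynomial equals $\gamma(I_{n-2,n-3}\cup\text{stuff})$; deletion $\Gamma_{C_n}-n$ is a path $P_{n-1}$ together with two extra sink-arcs into vertices $1$ and $n-1$, which I will call $K_{n-1,\text{with two tails}}$. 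I would then peel one more vertex off each auxiliary graph using \eqref{rec} again, at which point a $J_{0,1}$ factor (with $U_1=\emptyset$, polynomial $0$, by Remark~\ref{sp-cases}) kills the troublesome term, exactly as in the path case. After two rounds of peeling, every auxiliary polynomial is expressed in terms of $\gamma(\Gamma_{P_j},t)=\gamma(P_j,t)$ for $j=n-1,n-2,n-3$, and collecting coefficients yields $\gamma(C_n,t)=\gamma(C_{n-1},t)+t\,\gamma(C_{n-2},t)+t^2\,\gamma(C_{n-3},t)$ once I check it also holds expressed directly in the $\gamma(C_j,t)$; since $\gamma(C_j,t)$ itself obeys the analogous peeling, an induction closes the loop.

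The hard part will be bookkeeping the auxiliary graphs correctly — specifically identifying $\Gamma_{C_n}-n$ and $\Gamma_{C_n}-N^+[n]$ up to the isomorphisms that preserve the domination polynomial, and making sure the ``extra tail'' arcs behave under a further application of \eqref{rec} so that the $\gamma(J_{0,1},t)=0$ cancellation occurs. A secondary subtlety is the small cases: the recurrence is claimed for $n\ge 4$, so $\gamma(C_3,t)$ must be computed by hand (the triangle $K_3$, giving $1+3t+3t^2$ directly from \eqref{Complete}), and one must confirm the auxiliary-graph recurrences have matching initial data so the induction starts. Finally I would verify the three stated initial conditions $\gamma(C_1,t)=1$, $\gamma(C_2,t)=1+2t$, $\gamma(C_3,t)=1+3t+3t^2$ are consistent with, but not derivable from, the recurrence (e.g.\ plugging $n=4$ should reproduce $\gamma(C_4,t)$), and then close the proof. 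Since the excerpt already asserts ``it can be analogously proved,'' I would keep this tight, presenting the two applications of \eqref{rec}, the $J_{0,1}$-cancellation, and the coefficient collection, and leaving the isomorphism identifications as short remarks.

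\begin{proof}[Proof sketch]
We argue as in the proof for paths, replacing $\Gamma_{P_n}$ by $\Gamma_{C_n}$ and introducing the analogous one-way oriented auxiliary graphs. Applying \eqref{rec} at the vertex $n\in U_1(\Gamma_{C_n})$ writes $\gamma(C_n,t)=t\,\gamma(\Gamma_{C_n}-n,t)+\gamma(\Gamma_{C_n}-N^+[n],t)$; the extracted graph $\Gamma_{C_n}-N^+[n]$ is, up to a polynomial-preserving isomorphism, a path-type graph $I_{n-2,n-3}$, while $\Gamma_{C_n}-n$ is a path $\Gamma_{P_{n-1}}$ carrying two additional sink in-arcs. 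Peeling one further vertex from the latter via \eqref{rec} produces a disjoint $J_{0,1}$-factor whose polynomial vanishes by Remark~\ref{sp-cases}, exactly as in the path computation. Iterating the peeling twice reduces every auxiliary polynomial to combinations of $\gamma(P_{n-1},t)$, $\gamma(P_{n-2},t)$, $\gamma(P_{n-3},t)$, and — using that $\gamma(C_j,t)$ satisfies the same peeling identities — collecting coefficients gives
\begin{equation*}
\gamma(C_n,t)=\gamma(C_{n-1},t)+t\,\gamma(C_{n-2},t)+t^2\,\gamma(C_{n-3},t)
\end{equation*}
for all $n\ge4$. The initial conditions follow by direct inspection: $\gamma(C_1,t)=\gamma(K_1,t)=1$, $\gamma(C_2,t)=1+2t$ by definition of the dicycle, and $\gamma(C_3,t)=\gamma(K_3,t)=(1+t)^3-t^3=1+3t+3t^2$ by \eqref{Complete}.
\end{proof}
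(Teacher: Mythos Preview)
Your overall strategy is exactly what the paper indicates: it gives no proof beyond ``it can be analogously proved'' by applying \eqref{rec} to $\Gamma_{C_n}$ with suitable auxiliary one-way oriented graphs, which is precisely your plan. In that sense there is nothing to compare.

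That said, the concrete identifications in your proof sketch are off, and since you yourself flag the bookkeeping as ``the hard part'', this matters. When you extract at $n\in U_1(\Gamma_{C_n})$, you remove $n$ from $U_1$ and $N^+(n)=\{1,n-1,n\}$ from $U_2$; the resulting graph has $|U_1|=n-1$ and $|U_2|=n-3$, so it cannot be $I_{n-2,n-3}$ (which has $|U_1|=n-2$). It is the \emph{double-tailed} path you correctly anticipated in your planning paragraph: $\Gamma_{P_{n-3}}$ on the $U_2$-vertices $2,\dots,n-2$ together with two extra $U_1$-vertices $1$ and $n-1$, each with a single out-arc into an endpoint. Likewise, $\Gamma_{C_n}-n$ is not ``$\Gamma_{P_{n-1}}$ carrying two additional sink in-arcs'' in the sense of two new $U_2$-vertices: the surviving arcs $(1,n)$ and $(n-1,n)$ both land on the \emph{same} $U_2$-vertex $n$, so a further peel at $n-1$ (or at $1$) does not split off a $J_{0,1}$ factor; instead you get a $J$-type tail at one end and an extra in-arc at the other, which is again a new auxiliary type.

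None of this breaks the method, but it does mean one more layer of auxiliary graphs than in the path case: you must either (a) show that the double-tailed family itself satisfies the three-term recurrence (two applications of \eqref{rec} at an end vertex, using the $I$-recurrence already established in the path proof), or (b) express $\gamma(\Gamma_{C_n},t)$ as a fixed linear combination of $\gamma(I_{j+1,j},t)$'s and inherit the recurrence from there. Either route works; just don't collapse the double tail to a single $I$-graph prematurely. Your treatment of the initial conditions is correct.
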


Similarly as done before for paths, we have the following (Theorem 4.4 of \cite{AP-cycles})

\begin{cor} \label{cor-cycles}
$\gamma _{k}(C_{n})=\gamma _{k-1}(C_{n-1})+\gamma
_{k-1}(C_{n-2})+\gamma _{k-1}(C_{n-3})$ for every $n\geq 4$ and $2
\leq k \leq n-2$ with initial conditions $\gamma _{1}(C_{1})=1$,
$\gamma _{1}(C_{2})=2$, $\gamma _{1}(C_{3})=3$, $\gamma
_{2}(C_{2})=1$ and $\gamma _{2}(C_{3})=3$. \label{rec-cycles}
\end{cor}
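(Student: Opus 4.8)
The plan is to repeat, for cycles, the coefficient-extraction argument used above to deduce Corollary~\ref{cor-paths} from the path recurrence. I would start from the identity
\[
\gamma(C_{n},t)=\gamma(C_{n-1},t)+t\,\gamma(C_{n-2},t)+t^{2}\gamma(C_{n-3},t),
\]
proved in the theorem immediately above, and substitute the defining expansion $\gamma(G,t)=\sum_{j\ge 1}\gamma_{j}(G)\,t^{|V(G)|-j}$ into every term. The factors $t$ and $t^{2}$ are exactly what is needed to align exponents: $\gamma(C_{n-1},t)$, $t\,\gamma(C_{n-2},t)$ and $t^{2}\gamma(C_{n-3},t)$ all turn into sums of powers $t^{\,n-1-j}$, so after the reindexing $j\mapsto k-1$ the right-hand side becomes $\sum_{k}\bigl[\gamma_{k-1}(C_{n-1})+\gamma_{k-1}(C_{n-2})+\gamma_{k-1}(C_{n-3})\bigr]t^{\,n-k}$ (with the usual conventions $\gamma_{0}=0$ and $\gamma_{j}(C_{m})=0$ for $j>m$), while the left-hand side is $\sum_{k}\gamma_{k}(C_{n})\,t^{\,n-k}$.

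Next I would equate coefficients of $t^{\,n-k}$. For $2\le k\le n-2$ all three summands on the right are interior to their index ranges, so the comparison immediately gives
\[
\gamma_{k}(C_{n})=\gamma_{k-1}(C_{n-1})+\gamma_{k-1}(C_{n-2})+\gamma_{k-1}(C_{n-3}),\qquad n\ge 4 .
\]
The leftover powers — namely $t^{\,n-1}$ together with the constant and linear-in-$t$ terms, i.e. the indices $k=1,n-1,n$ — fall outside this range and must be matched separately; this bookkeeping is the only delicate point, and it reduces exactly as in the path case to the harmless facts $\gamma_{1}(C_{n})=0$ for $n\ge4$, $\gamma_{n-1}(C_{n})=n$ and $\gamma_{n}(C_{n})=1$ (equivalently, the low-order correction on the right collapses to $n\,t+1$, using $\gamma_{n-1}(C_{n-1})=1$ and $\gamma_{n-2}(C_{n-1})+\gamma_{n-2}(C_{n-2})=(n-1)+1=n$). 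Finally the listed initial values are checked directly from the small graphs: $C_{1}=K_{1}$ gives $\gamma_{1}(C_{1})=1$; the dicycle $C_{2}$ gives $\gamma_{1}(C_{2})=2$ and $\gamma_{2}(C_{2})=1$; and $C_{3}=K_{3}$ gives $\gamma_{1}(C_{3})=3$ and $\gamma_{2}(C_{3})=3$ — all consistent with $\gamma(C_{1},t)=1$, $\gamma(C_{2},t)=1+2t$ and $\gamma(C_{3},t)=1+3t+3t^{2}$.

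I do not anticipate any real obstacle: the whole proof is the path computation with $P$ replaced by $C$, and the single thing that genuinely has to be re-verified is that the boundary coefficients (degrees $n-1$, $1$ and $0$) behave as before — which they do, since $\gamma_{n-1}(C_{n})=n$ and $\gamma_{n}(C_{n})=1$ play precisely the roles of $\gamma_{n-1}(P_{n})=n$ and $\gamma_{n}(P_{n})=1$. If one wishes to sidestep even that, it suffices to note that two polynomials agree iff their coefficient sequences agree and to read off the recurrence on the common index range $2\le k\le n-2$, relegating the finitely many exceptional indices to the stated initial data; that is the form I would use in the write-up.
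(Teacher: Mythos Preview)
Your proposal is correct and follows essentially the same approach as the paper: the paper simply writes ``Similarly as done before for paths'' and defers to the explicit coefficient-extraction computation carried out between the path theorem and Corollary~\ref{cor-paths}, which is exactly what you reproduce with $P$ replaced by $C$. Your handling of the boundary indices and the verification of the initial values are likewise in line with the paper's treatment (which records $\gamma_{n-1}(C_{n})=n$ and $\gamma_{n}(C_{n})=1$ immediately after the corollary).
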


Notice that $\gamma _{n-1}(C_{n})=n$ and $\gamma _{n}(C_{n})=1$.
\bigskip

The recurrence relations for $\gamma _{k}(P_{n})$ and $\gamma
_{k}(C_{n})$ of Corollaries \ref{cor-paths} and \ref{cor-cycles} are
generalizations to the well-known Tribonacci numbers defined by the
recurrence relation  $T_{n}=T_{n-1}+T_{n-2}+T_{n-3}$ for every  $n
\geq 3$ with initial conditions  $T_{0}=T_{1}=1$ and $T_{2}=2$.
Originally, the Tribonacci sequence was first discussed by Feinberg
in \cite{F1}. In \cite{F2}, these numbers are expressed as sums of
numbers along diagonal planes of the the so-called Pascal's pyramid,
a natural generalization of the Pascal's triangle.

Later, a closed formula for the Tribonacci numbers was proved by Shannon in \cite{Shannon}.

\begin{theorem}[\cite{Shannon}]
For every nonnegative integer $n$,
\[
T_{n}=\sum_{m=0}^{\left\lfloor \frac{n}{2}\right\rfloor
}\sum_{r=0}^{\left\lfloor \frac{n}{3}\right\rfloor }\binom{n-m-2r}{m+r}%
\binom{m+r}{r}.
\]
\end{theorem}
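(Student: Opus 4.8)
The plan is to prove this via the standard combinatorial model of the Tribonacci numbers, namely \emph{ordered sums}. First I would show that $T_n$ equals $t_n$, the number of ways to write $n$ as an ordered sum of parts each equal to $1$, $2$, or $3$ (equivalently, the number of tilings of a $1\times n$ strip by tiles of lengths $1$, $2$, $3$). Deleting the last part of such a composition of $n$ (for $n\ge 3$) gives a composition of $n-1$, $n-2$, or $n-3$, and this operation is a bijection onto the disjoint union of those three sets; hence $t_n=t_{n-1}+t_{n-2}+t_{n-3}$. Since $t_0=t_1=1$ and $t_2=2$ by direct inspection, the defining recurrence forces $t_n=T_n$ for every $n\ge 0$.

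Next I would enumerate these compositions by the statistics $(m,r)$, where $m$ is the number of parts equal to $2$ and $r$ the number of parts equal to $3$. For fixed $m,r\ge 0$ with $2m+3r\le n$, the remaining $n-2m-3r$ parts are all $1$'s, so the composition is an ordered word of length $(n-2m-3r)+m+r=n-m-2r$ in which $m$ letters are $2$'s, $r$ are $3$'s, and the rest $1$'s; the number of such words is
\[
\binom{n-m-2r}{\,n-2m-3r,\;m,\;r\,}=\binom{n-m-2r}{m+r}\binom{m+r}{r},
\]
the first factor choosing the $m+r$ positions occupied by parts greater than $1$ and the second choosing which of those are $3$'s. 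Summing over all admissible $(m,r)$ gives $T_n=\sum_{m,r}\binom{n-m-2r}{m+r}\binom{m+r}{r}$. It then remains to match the summation ranges: under the usual convention that $\binom{a}{b}=0$ unless $0\le b\le a$, any term with $2m+3r>n$ is zero, because a nonzero term requires $m+r\le n-m-2r$, i.e.\ $2m+3r\le n$. Hence the sum may be extended to $0\le m\le\lfloor n/2\rfloor$ and $0\le r\le\lfloor n/3\rfloor$ without changing its value, which is exactly Shannon's formula.

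An alternative I would record is the generating-function derivation: from the recurrence and initial conditions, $\sum_{n\ge 0}T_n x^n=(1-x-x^2-x^3)^{-1}=\sum_{j\ge 0}(x+x^2+x^3)^j$, and expanding $(x+x^2+x^3)^j$ by the multinomial theorem and collecting the coefficient of $x^n$ (with $j=m+r$) reproduces the same double sum. Either route is short; the only point needing any care — what I would flag as the main, if modest, obstacle — is precisely the bookkeeping of the index ranges, i.e.\ verifying that extending the sums up to $\lfloor n/2\rfloor$ and $\lfloor n/3\rfloor$ introduces only vanishing terms under the stated binomial convention, as noted above.
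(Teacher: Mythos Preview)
Your argument is correct, but there is nothing in the paper to compare it against: this theorem is merely quoted from \cite{Shannon} as background for the analogy with Theorems~\ref{Sol} and~\ref{Sol-cycles}, and the paper supplies no proof of it. Your combinatorial derivation via compositions of $n$ into parts $1,2,3$ is the standard one and is sound, including the check that extending the summation to $0\le m\le\lfloor n/2\rfloor$, $0\le r\le\lfloor n/3\rfloor$ adds only vanishing terms under the convention $\binom{a}{b}=0$ unless $0\le b\le a$.

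One small slip in your generating-function alternative: the exponent $j$ in $(x+x^2+x^3)^j$ counts the total number of parts, so the term contributing to $x^n$ with $m$ twos and $r$ threes has $j=n-m-2r$, not $j=m+r$. This does not affect your main (combinatorial) proof.
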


The identity of this theorem generates sequence A000073 of
\cite{Sloane}. This formula is in some sense "analogous" to the
identities of Theorems \ref{Sol} and \ref{Sol-cycles} for the number
of dominating $k$-sets of paths and cycles, respectively.
\bigskip

Let $W_{n}$ be the wheel with $n$ vertices. Recall that
$W_{n}=K_{1}+C_{n-1}$. Using the identities (\ref{Complete}) and
(\ref{Sum}) for domination polynomials of complete graphs and the
sum of two graphs, respectively, we have the following results.

\begin{theorem}
Let $n$ be a positive integer. Then $\gamma
(W_{n},t)=(1+t)^{n-1}+t\gamma (C_{n-1},t)$ for every $n\geq 4$.
\end{theorem}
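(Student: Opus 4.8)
The plan is to apply the sum formula \eqref{Sum} with $G = K_1$ and $H = C_{n-1}$, since $W_n = K_1 + C_{n-1}$ by definition. Here $G$ has $n_G = 1$ vertex and $H$ has $n_H = n-1$ vertices, so the total is $n_G + n_H = n$ vertices. Substituting into \eqref{Sum} gives
\[
\gamma(W_n,t) = \gamma(K_n,t) - t^{\,n-1}\bigl[\gamma(K_1,t) - \gamma(K_1,t)\bigr] - t^{1}\bigl[\gamma(K_{n-1},t) - \gamma(C_{n-1},t)\bigr].
\]
The first bracketed term vanishes outright because $G = K_1$, so the middle summand is zero and this is the main simplification to perform carefully. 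What remains is
\[
\gamma(W_n,t) = \gamma(K_n,t) - t\,\gamma(K_{n-1},t) + t\,\gamma(C_{n-1},t).
\]

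Next I would evaluate $\gamma(K_n,t) - t\,\gamma(K_{n-1},t)$ using the closed form \eqref{Complete}, namely $\gamma(K_n,t) = (1+t)^n - t^n$ and $\gamma(K_{n-1},t) = (1+t)^{n-1} - t^{n-1}$. Then
\[
\gamma(K_n,t) - t\,\gamma(K_{n-1},t) = (1+t)^n - t^n - t\,(1+t)^{n-1} + t^n = (1+t)^{n-1}\bigl[(1+t) - t\bigr] = (1+t)^{n-1},
\]
where the $-t^n$ and $+t^n$ terms cancel and the factor $(1+t) - t = 1$ collapses the remaining expression. Combining this with the surviving $t\,\gamma(C_{n-1},t)$ term yields exactly $\gamma(W_n,t) = (1+t)^{n-1} + t\,\gamma(C_{n-1},t)$, as claimed.

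The only genuine point requiring care is the bookkeeping of which graph plays the role of $G$ versus $H$ in \eqref{Sum} and correctly tracking the exponents $t^{n_H}$ and $t^{n_G}$; a swap would produce a spurious $t^{n-1}$ factor in the wrong place. A secondary subtlety is that \eqref{Sum} is a statement about simple graphs, and we should note that $C_{n-1}$ is a genuine simple cycle for $n \geq 4$ (so $n - 1 \geq 3$), which is exactly the hypothesis of the theorem; this is why the restriction $n \geq 4$ appears and no separate initial conditions are needed. I do not anticipate any real obstacle beyond these clerical checks, since everything reduces to algebraic manipulation of the already-established identities \eqref{Complete} and \eqref{Sum}.
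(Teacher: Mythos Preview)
Your proof is correct and follows exactly the approach the paper indicates: apply the sum formula \eqref{Sum} to $W_n = K_1 + C_{n-1}$ together with the closed form \eqref{Complete} for complete graphs. You have simply supplied the algebraic details that the paper leaves to the reader.
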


\begin{cor}
$\gamma _{k}(W_{n})=\binom{n-1}{n-k}+\gamma _{k}(C_{n-1})$ for every
$n\geq 4 $ and $1 \leq k \leq n$. \label{wheels}
\end{cor}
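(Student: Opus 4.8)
The plan is to derive Corollary \ref{wheels} directly from the preceding theorem $\gamma(W_n,t)=(1+t)^{n-1}+t\,\gamma(C_{n-1},t)$ by comparing coefficients, which reduces the statement to a routine expansion of the two generating functions on the right-hand side. First I would write out both sides of the theorem as polynomials in $t$. By definition, $\gamma(W_n,t)=\sum_{k=1}^{n}\gamma_k(W_n)\,t^{n-k}$, so the coefficient of $t^{n-k}$ on the left is exactly $\gamma_k(W_n)$.

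Next I would expand the right-hand side. For the first summand, $(1+t)^{n-1}=\sum_{j=0}^{n-1}\binom{n-1}{j}t^{j}$; setting $j=n-k$ shows the coefficient of $t^{n-k}$ in $(1+t)^{n-1}$ is $\binom{n-1}{n-k}$ for $0\le n-k\le n-1$, i.e. for $1\le k\le n$ (and one checks it is $0$ for $k$ outside this range, consistent with the convention $\gamma_0=0$). For the second summand, recall $\gamma(C_{n-1},t)=\sum_{k=1}^{n-1}\gamma_k(C_{n-1})\,t^{(n-1)-k}$, so $t\,\gamma(C_{n-1},t)=\sum_{k=1}^{n-1}\gamma_k(C_{n-1})\,t^{n-k}$, whose coefficient of $t^{n-k}$ is $\gamma_k(C_{n-1})$ for $1\le k\le n-1$, and $0$ for $k=n$ (which matches $\gamma_n(C_{n-1})=0$ since $C_{n-1}$ has only $n-1$ vertices). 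Adding the two contributions, the coefficient of $t^{n-k}$ on the right-hand side is $\binom{n-1}{n-k}+\gamma_k(C_{n-1})$ for every $1\le k\le n$, and equating with the left-hand coefficient gives $\gamma_k(W_n)=\binom{n-1}{n-k}+\gamma_k(C_{n-1})$, as claimed.

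The only points requiring any care — and the closest thing to an obstacle, though it is minor — are the bookkeeping of index ranges and the boundary cases $k=1$ and $k=n$: one must confirm that the conventions $\gamma_0(G)=0$ and $\gamma_k(G)=0$ for $k>|V(G)|$ make the coefficient extraction valid uniformly over $1\le k\le n$, rather than forcing a separate treatment of the endpoints. Since the theorem is already stated for $n\ge 4$, no separate small-case verification is needed beyond what that theorem assumes. Thus the proof is essentially immediate once the two binomial/polynomial expansions are written down and matched term by term.
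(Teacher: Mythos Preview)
Your proposal is correct and follows exactly the approach the paper uses (and implicitly intends here): the paper derives its corollaries from the corresponding polynomial recurrences by expanding each domination polynomial according to its definition and comparing coefficients of like powers of $t$, just as you do with $(1+t)^{n-1}$ and $t\,\gamma(C_{n-1},t)$. Your handling of the boundary cases $k=1$ and $k=n$ via the conventions $\gamma_0(G)=0$ and $\gamma_k(G)=0$ for $k>|V(G)|$ is appropriate and matches the paper's conventions.
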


\section{The number of dominating $k$-sets of paths, cycles and wheels with $%
n$ vertices}

In this section, we explicitly compute the numbers $\gamma
_{k}(P_{n})$ for every positive integers $n$ and $k$ and so, we give
a solution to Problem \ref{Prob-Ali}. By Corollary \ref{rec-gamma},
we know that $\gamma _{k}(P_{n})=\gamma _{k-1}(P_{n-1})+\gamma
_{k-1}(P_{n-2})+\gamma _{k-1}(P_{n-3})$ for every $n\geq 4$ and $2
\leq k \leq n-2$ with initial conditions $\gamma _{1}(P_{1})=1$,
$\gamma _{1}(P_{2})=2$ and $\gamma _{1}(P_{3})=1$. Let us denote
$\gamma _{n,k}=\gamma _{k}(P_{n}).$ We define the ordinary
generating function for the numbers $\gamma _{n,k}=\gamma
_{k}(P_{n})$ as
\begin{equation*}
G(x,y)=\sum_{n\geq 1}\sum_{k\geq 1}\gamma _{n,k}\ x^{n}\ y^{k}.
\end{equation*}

\begin{theorem}
\label{gf-paths}
\begin{equation*}
G(x,y)=\frac{x(1+x)^{2}y}{1-(x+x^{2}+x^{3})y}.
\end{equation*}
\end{theorem}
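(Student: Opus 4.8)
The plan is to derive the closed form for $G(x,y)$ directly from the recurrence in Corollary~\ref{rec-gamma} together with the stated initial conditions, treating the sum as a formal power series. First I would split the double sum according to where the recurrence $\gamma_{n,k}=\gamma_{n-1,k-1}+\gamma_{n-2,k-1}+\gamma_{n-3,k-1}$ is valid, namely for $n\ge 4$ and $2\le k\le n-2$, and isolate the "boundary" terms: the contributions from $n\in\{1,2,3\}$, from $k=1$, and from the top-degree coefficients $\gamma_{n,n-1}=n$ and $\gamma_{n,n}=1$ (recorded right after the corollary). So I would write $G(x,y)=B(x,y)+R(x,y)$, where $B$ collects all boundary terms and $R=\sum_{n\ge 4}\sum_{k=2}^{n-2}\gamma_{n,k}x^ny^k$ is the "bulk".

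Next I would apply the recurrence inside $R$. Substituting $\gamma_{n,k}=\gamma_{n-1,k-1}+\gamma_{n-2,k-1}+\gamma_{n-3,k-1}$ and shifting indices, each of the three pieces becomes $(x+x^2+x^3)y$ times a sub-sum of $G$ over a shifted index range. The key point is that, after the shift, these sub-sums differ from $G$ itself only by boundary terms — precisely the same families of boundary terms (small $n$, $k=1$, and the $n-1$, $n$ coefficients) that I already separated out. Collecting everything gives a linear equation of the shape
\[
G(x,y)=(x+x^2+x^3)y\,G(x,y)+\Phi(x,y),
\]
where $\Phi(x,y)$ is an explicit polynomial/rational correction assembled entirely from the known boundary data. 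Solving, $G(x,y)=\Phi(x,y)/\bigl(1-(x+x^2+x^3)y\bigr)$, and the claim is that $\Phi(x,y)=x(1+x)^2y=(x+2x^2+x^3)y$.

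The main obstacle — and the only real work — is the careful bookkeeping of the boundary corrections so that $\Phi$ collapses to exactly $(x+2x^2+x^3)y$. Concretely I expect: the $n=1,2,3$ rows contribute $x y + (2x^2 y + x^2 y^2) + (x^3 y + 3 x^3 y^2 + x^3 y^3)$ from their explicit polynomials $\gamma(P_1,t),\gamma(P_2,t),\gamma(P_3,t)$; the $k=1$ column beyond $n=3$ contributes nothing since $\gamma_1(P_n)=0$ for $n\ge 4$; and the near-diagonal terms $\gamma_{n,n-1}=n$, $\gamma_{n,n}=1$ must be added back where the recurrence range $k\le n-2$ excluded them, while the shifted copies of $G$ on the right-hand side over-count or under-count precisely these same diagonals. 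All the genuinely $y$-dependent high-degree pieces should cancel, leaving a polynomial linear in $y$. I would verify the final form by checking the first few coefficients against $\gamma(P_1,t),\dots,\gamma(P_4,t)$ — e.g. the $x^4$ coefficient of $G$ should read off $\gamma(P_4,t)=\gamma(P_3,t)+t\gamma(P_2,t)+t^2\gamma(P_1,t)=1+4t+3t^2$ correctly — which pins down that no stray constant or sign has been lost.

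An alternative, essentially equivalent route is to introduce the row generating polynomials $g_n(y)=\sum_k \gamma_{n,k}y^k$ (so $g_n(y)=\gamma(P_n,t)$ with $y=1/t$ up to the $t^n$ factor), note that Corollary~\ref{rec-gamma} together with the near-diagonal values gives $g_n(y)=y\bigl(g_{n-1}(y)+g_{n-2}(y)+g_{n-3}(y)\bigr)$ for $n\ge 4$ with $g_1=y$, $g_2=2y+y^2$, $g_3=y+3y^2+y^3$, then set $G(x,y)=\sum_{n\ge1}g_n(y)x^n$ and sum the recurrence against $x^n$ for $n\ge 4$. This packages the boundary accounting into the three initial polynomials and makes the cancellation $g_1+ (g_2 - y g_1) x + (g_3 - y g_2 - y g_1) x^2 = x(1+x)^2 y$ transparent after multiplying through by $x$; I would present this form as it is the cleanest.
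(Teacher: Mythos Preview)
Your approach is correct and is essentially the paper's own proof: the paper computes $[1-(x+x^{2}+x^{3})y]\,G(x,y)$ directly, shifts indices, cancels the bulk via the recurrence, and evaluates the surviving boundary terms to $xy+2x^{2}y+x^{3}y$. Two small remarks: first, your worry about the near-diagonal cases $k=n-1,n$ is unnecessary, since the identity $\gamma_{n,k}=\gamma_{n-1,k-1}+\gamma_{n-2,k-1}+\gamma_{n-3,k-1}$ in fact holds for all $k\ge 2$ once one adopts the convention $\gamma_{n,k}=0$ for $k>n$ (check $\gamma_{n,n}=1=1+0+0$ and $\gamma_{n,n-1}=n=(n-1)+1+0$), which is exactly how the paper avoids that extra bookkeeping; second, your verification example should read $\gamma(P_{4},t)=1+4t+4t^{2}$, not $1+4t+3t^{2}$. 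Your alternative via the row polynomials $g_{n}(y)$ is a clean repackaging of the same computation and gives the numerator as $g_{1}x+(g_{2}-yg_{1})x^{2}+(g_{3}-yg_{1}-yg_{2})x^{3}=y\,x(1+x)^{2}$.
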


\begin{proof}
Consider the following identity:%
\begin{eqnarray*}
&&\left[ 1-(x+x^{2}+x^{3})y\right] G(x,y) \\
&=&G(x,y)-x~y~G(x,y)-x^{2}y~G(x,y)-x^{3}y~G(x,y)
\end{eqnarray*}
\begin{eqnarray*}
&=&\sum_{n\geq 1}\sum_{k\geq 1}\gamma _{n,k}\ x^{n}\
y^{k}-\sum_{n\geq
1}\sum_{k\geq 1}\gamma _{n,k}\ x^{n+1}\ y^{k+1} \\
&&-\sum_{n\geq 1}\sum_{k\geq 1}\gamma _{n,k}\ x^{n+2}\
y^{k+1}-\sum_{n\geq 1}\sum_{k\geq 1}\gamma _{n,k}\ x^{n+3}\ y^{k+1}
\end{eqnarray*}
\begin{eqnarray*}
&=&\sum_{n\geq 1}\sum_{k\geq 1}\gamma _{n,k}\ x^{n}\
y^{k}-\sum_{n\geq
2}\sum_{k\geq 2}\gamma _{n-1,k-1}\ x^{n}\ y^{k} \\
&&-\sum_{n\geq 3}\sum_{k\geq 2}\gamma _{n-2,k-1}\ x^{n}\
y^{k}-\sum_{n\geq 4}\sum_{k\geq 2}\gamma _{n-3,k-1}\ x^{n}\ y^{k}
\end{eqnarray*}
\begin{eqnarray*}
&=&\gamma _{1,1}~x~y+\gamma _{2,1}~x^{2}~y+\gamma
_{3,1}~x^{3}~y+\gamma
_{1,2}~x~y^{2}+\gamma _{2,2}~x^{2}~y^{2}+\gamma _{3,2}~x^{3}~y^{2} \\
&&+\sum_{n\geq 4}\sum_{k\geq 2}\gamma _{n,k}\ x^{n}\ y^{k}-\left[
\gamma _{1,1}~x^{2}y^{2}+\gamma _{2,1}~x^{3}~y^{2}\right]
-\sum_{n\geq
4}\sum_{k\geq 2}\gamma _{n-1,k-1}\ x^{n}\ y^{k} \\
&&-\gamma _{1,1}~x^{3}~y^{2}-\sum_{n\geq 4}\sum_{k\geq 2}\gamma
_{n-2,k-1}\ x^{n}\ y^{k}-\sum_{n\geq 4}\sum_{k\geq 2}\gamma
_{n-3,k-1}\ x^{n}\ y^{k}
\end{eqnarray*}
\begin{eqnarray*}
&=&\gamma _{1,1}\left[ x~y-x^{2}~y^{2}-x^{3}~y^{2}\right] +\gamma _{2,1}%
\left[ ~x^{2}~y-x^{3}~y^{2}\right] +\gamma _{3,1}~x^{3}~y \\
&&+\gamma _{1,2}~x~y^{2}+\gamma _{2,2}~x^{2}~y^{2}+\gamma
_{3,2}~x^{3}~y^{2}
\\
&&+\sum_{n\geq 4}\sum_{k\geq 2}\left[ \gamma _{n,k}-\gamma
_{n-1,k-1}-\gamma _{n-2,k-1}-\gamma _{n-3,k-1}\right] \ x^{n}\ y^{k}
\end{eqnarray*}
\begin{eqnarray*}
&=&x~y-x^{2}~y^{2}-x^{3}~y^{2}+2~x^{2}~y-2~x^{3}~y^{2}+~x^{3}~y+x^{2}~y^{2}+3~x^{3}~y^{2}
\\
\allowbreak &=&xy+2x^{2}y+x^{3}y=x(1+x)^{2}y,
\end{eqnarray*}
where $\gamma _{n,k}=\gamma _{k}(P_{n})=0$ if $n<k$ and using
Corollary \ref{rec-gamma}.
\end{proof}

If we determine the formal power series of $G(x,y)$ expanded in
powers of $y$,
we have that%
\begin{eqnarray*}
G(x,y) &=&\frac{x(1+x)^{2}y}{1-(x+x^{2}+x^{3})y} \\
&=&x(1+x)^{2}y\sum_{k\geq 0}(x+x^{2}+x^{3})^{k}y^{k} \\
&=&\sum_{k\geq 1}x(1+x)^{2}(x+x^{2}+x^{3})^{k-1}y^{k} \\
&=&\sum_{k\geq 1}\left( \sum_{n\geq 1}\gamma
_{k}(P_{n})~x^{n}\right) y^{k}
\end{eqnarray*}
and then for every $k\geq 1$%
\begin{eqnarray}
\sum_{n\geq 1}\gamma _{k}(P_{n})~x^{n}
&=&x(1+x)^{2}(x+x^{2}+x^{3})^{k-1}
\notag \\
&=&x^{k}(1+x)^{2}(1+x+x^{2})^{k-1}  \label{poly}
\end{eqnarray}
is a monic polynomial denoted by $g_{k}(x)$ of degree $3k$ such that%
\begin{equation*}
g_{k}(x)=\sum_{n=k}^{3k}\gamma
_{k}(P_{n})~x^{n}=\sum_{t=0}^{2k}\gamma _{k}(P_{k+t})~x^{k+t}
\end{equation*}
since $\gamma _{k}(P_{n})=0$ if $n<k$ and the well-known fact that
$\gamma (P_{n})=\left\lceil \frac{n}{3}\right\rceil $ for every
$n\geq 1$. Observe that $g_{k}(x)$ has $2k+1$ terms,
$g_{k}(x)=x^{2k}g_{k}(\frac{1}{x})$ and so, $\gamma
_{k}(P_{n})=\gamma _{k}(P_{4k-n})$ which means that the polynomial
is symmetric with respect to the ($k+1)$-th term.

\begin{theorem} \label{Sol}
For every $k\geq 1$ and $t\geq 0$, the number of dominating $k$-sets
of the path $P_{k+t}$ is
\begin{equation*}
\gamma _{k}(P_{k+t})=\sum_{m=0}^{\left\lfloor
\frac{t}{2}\right\rfloor +1} \binom{k-1}{t-m}\binom{t-m+2}{m}.
\end{equation*}
\end{theorem}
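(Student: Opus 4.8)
The plan is to read off $\gamma_k(P_{k+t})$ directly from the closed form for the generating polynomial $g_k(x)$ established in equation (\ref{poly}), and then to expand that polynomial in a way that produces the stated double binomial sum. Since
\[
\sum_{n\geq 1}\gamma_k(P_n)\,x^n = x^k(1+x)^2(1+x+x^2)^{k-1},
\]
the number $\gamma_k(P_{k+t})$ is precisely the coefficient of $x^{k+t}$ in the right-hand side, equivalently the coefficient of $x^t$ in $(1+x)^2(1+x+x^2)^{k-1}$. So the whole task reduces to computing $[x^t]\,(1+x)^2(1+x+x^2)^{k-1}$.

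The key algebraic trick is the factorization $1+x+x^2 = 1 + x(1+x)$. Using the binomial theorem,
\[
(1+x+x^2)^{k-1} = \bigl(1+x(1+x)\bigr)^{k-1} = \sum_{j=0}^{k-1}\binom{k-1}{j}x^{j}(1+x)^{j},
\]
and therefore
\[
(1+x)^2(1+x+x^2)^{k-1} = \sum_{j=0}^{k-1}\binom{k-1}{j}x^{j}(1+x)^{j+2}.
\]
Extracting the coefficient of $x^t$ term by term gives $[x^t]\,x^j(1+x)^{j+2} = \binom{j+2}{t-j}$, so
\[
\gamma_k(P_{k+t}) = \sum_{j}\binom{k-1}{j}\binom{j+2}{t-j}.
\]
Substituting $m = t-j$ turns this into $\sum_m \binom{k-1}{t-m}\binom{t-m+2}{m}$, which is exactly the claimed expression once the index range is identified.

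The only point requiring a little care — and the closest thing to an obstacle — is pinning down the summation bounds. The binomial $\binom{j+2}{t-j}$ vanishes unless $0\leq t-j\leq j+2$, i.e.\ unless $j\leq t$ and $j\geq \lceil t/2\rceil - 1$; translating via $m=t-j$ shows that the nonzero terms are exactly those with $0\leq m\leq \lfloor t/2\rfloor + 1$, matching the upper limit in the statement. (The factor $\binom{k-1}{t-m}$ automatically kills any surplus terms when $t$ is large relative to $k$, consistent with $\gamma_k(P_n)=0$ for $n<k$ and with the symmetry $\gamma_k(P_n)=\gamma_k(P_{4k-n})$ already noted.) As a sanity check I would verify the formula against the small cases recorded in Corollary \ref{cor-paths}, e.g.\ $\gamma_1(P_2)=2$, $\gamma_2(P_3)=3$, and the endpoint values $\gamma_k(P_k)=1$, $\gamma_k(P_{k+1})=?$; these follow immediately from the expansion above. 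No induction on the Tribonacci-type recurrence of Corollary \ref{cor-paths} is needed, though one could alternatively derive the same identity that way by invoking a Pascal-type relation on the binomial sum.
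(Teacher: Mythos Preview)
Your proof is correct and follows essentially the same approach as the paper: both start from the identity $g_k(x)=x^k(1+x)^2(1+x+x^2)^{k-1}$ and expand $(1+x+x^2)^{k-1}=\sum_j\binom{k-1}{j}x^j(1+x)^j$ before extracting the coefficient of $x^{k+t}$. Your version is slightly more economical in that you absorb the $(1+x)^2$ factor into $(1+x)^j$ to obtain $(1+x)^{j+2}$ directly, whereas the paper expands the three terms $x^k+2x^{k+1}+x^{k+2}$ separately and then reassembles them via Pascal's identity $\binom{l}{m}+2\binom{l}{m-1}+\binom{l}{m-2}=\binom{l+2}{m}$; the end result and the index bookkeeping are identical.
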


\begin{proof}
From identity (\ref{poly}), we have that
\begin{equation*}
(1+x+x^{2})^{k-1}=\sum_{l=0}^{k-1}\binom{k-1}{l}(x+x^{2})^{l}=%
\sum_{l=0}^{k-1}\binom{k-1}{l}\sum_{m=0}^{l}\binom{l}{m}x^{l-m}x^{2m}
\end{equation*}
and hence
\begin{equation}
(1+x+x^{2})^{k-1}=\sum_{l=0}^{k-1}\sum_{m=0}^{l}\binom{k-1}{l}\binom{l}{m}%
x^{l+m}.  \label{1xx2}
\end{equation}
Therefore,
\begin{equation*}
g_{k}(x)=(x^{k}+2x^{k+1}+x^{k+2})(1+x+x^{2})^{k-1}
\end{equation*}
\begin{eqnarray*}
&=&x^{k}\sum_{l=0}^{k-1}\binom{k-1}{l}\left( \sum_{m=0}^{l}\binom{l}{m}%
x^{m}\right) x^{l}+2x^{k+1}\sum_{l=0}^{k-1}\binom{k-1}{l}\left(
\sum_{m=0}^{l}\binom{l}{m}x^{m}\right) x^{l} \\
&&+x^{k+2}\sum_{l=0}^{k-1}\binom{k-1}{l}\left( \sum_{m=0}^{l}\binom{l}{m}%
x^{m}\right) x^{l}
\end{eqnarray*}
\begin{eqnarray*}
&=&\sum_{l=0}^{k-1}\sum_{m=0}^{l}\binom{k-1}{l}\binom{l}{m}%
x^{k+l+m}+\sum_{l=0}^{k-1}\sum_{m=0}^{l}2\binom{k-1}{l}\binom{l}{m}%
x^{k+l+m+1} \\
&&+\sum_{l=0}^{k-1}\sum_{m=0}^{l}\binom{k-1}{l}\binom{l}{m}x^{k+l+m+2}
\end{eqnarray*}
\begin{equation*}
=\sum_{l=0}^{k-1}\binom{k-1}{l}x^{k}\sum_{m=0}^{l}\left[ \binom{l}{m}%
x^{l+m}+2\binom{l}{m}x^{l+m+1}+\binom{l}{m}x^{l+m+2}\right]
\end{equation*}
\begin{equation*}
=\sum_{l=0}^{k-1}\binom{k-1}{l}x^{k}\left[ \sum_{m=0}^{l+2}\left( \binom{l}{m%
}+2\binom{l}{m-1}+\binom{l}{m-2}\right) x^{m+l}\right] .
\end{equation*}
Using Pascal's formula,
\begin{eqnarray*}
g_{k}(x) &=&x^{k}\sum_{l=0}^{k-1}\binom{k-1}{l}\left[ \sum_{m=0}^{l+2}\binom{%
l+2}{m}x^{m}\right] x^{l} \\
&=&x^{k}\sum_{l=0}^{k-1}\sum_{m=0}^{l+2}%
\binom{k-1}{l}\binom{l+2}{m}x^{l+m} \\
&=&x^{k}\sum_{t=0}^{2k}\sum_{m=0}^{\left\lfloor \frac{t}{2}\right\rfloor +1}%
\binom{k-1}{t-m}\binom{t-m+2}{m}x^{t} \\
&=&\sum_{t=0}^{2k}\sum_{m=0}^{\left%
\lfloor \frac{t}{2}\right\rfloor +1}\binom{k-1}{t-m}\binom{t-m+2}{m}x^{k+t}.%
\text{ }
\end{eqnarray*}
Hence,%
\begin{equation*}
\gamma _{k}(P_{k+t})=\sum_{m=0}^{\left\lfloor \frac{t}{2}\right\rfloor +1}%
\binom{k-1}{t-m}\binom{t-m+2}{m}
\end{equation*}
which is what had to be proven.
\end{proof}

The result of this theorem is an explicit formula for sequence A212633 of \cite%
{Sloane}.

\begin{cor}
Let $k$ be a positive integer. The number of the dominating sets of
minimum cardinality for paths is
\begin{equation*}
\begin{array}{ll}
\qquad \gamma _{k}(P_{3k})=1 & \text{if }n=3k\text{,} \\
\gamma _{k+1}(P_{3k+1})=\binom{k+2}{2}+k & \text{if }n=3k+1\text{ and} \\
\gamma _{k+1}(P_{3k+2})=k+2 & \text{if }n=3k+2\text{.}
\end{array}
\end{equation*}
\end{cor}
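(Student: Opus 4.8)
The plan is simply to specialize the closed formula of Theorem~\ref{Sol} to the exponent $t$ that makes $k$ (or $k+1$) coincide with the domination number. First I would recall the well-known value $\gamma(P_n)=\lceil n/3\rceil$, already invoked in the text, so that a minimum dominating set of $P_{3k}$ has $k$ vertices while one of $P_{3k+1}$ or $P_{3k+2}$ has $k+1$ vertices; thus the three quantities to be computed are exactly $\gamma_k(P_{3k})$, $\gamma_{k+1}(P_{3k+1})$ and $\gamma_{k+1}(P_{3k+2})$. Writing $t = n-k$ in the first case and $t = n-(k+1)$ in the other two yields $t = 2k$, $t = 2k$ and $t = 2k+1$ respectively, so each value becomes a finite sum of products of two binomial coefficients.

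For $n = 3k$, Theorem~\ref{Sol} gives $\gamma_k(P_{3k}) = \sum_{m=0}^{k+1}\binom{k-1}{2k-m}\binom{2k-m+2}{m}$; the factor $\binom{k-1}{2k-m}$ vanishes unless $2k-m \le k-1$, i.e.\ $m \ge k+1$, so only $m=k+1$ survives and contributes $\binom{k-1}{k-1}\binom{k+1}{k+1}=1$. For $n = 3k+1$ I would apply the formula with $k$ replaced by $k+1$ and $t=2k$, obtaining $\gamma_{k+1}(P_{3k+1}) = \sum_{m=0}^{k+1}\binom{k}{2k-m}\binom{2k-m+2}{m}$, where $\binom{k}{2k-m}\neq 0$ now forces $m\in\{k,k+1\}$; the term $m=k$ gives $\binom{k}{k}\binom{k+2}{k}=\binom{k+2}{2}$ and the term $m=k+1$ gives $\binom{k}{k-1}\binom{k+1}{k+1}=k$, for a total of $\binom{k+2}{2}+k$. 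Finally, for $n = 3k+2$, applying the formula with $k$ replaced by $k+1$ and $t=2k+1$ gives $\gamma_{k+1}(P_{3k+2}) = \sum_{m=0}^{k+1}\binom{k}{2k+1-m}\binom{2k+3-m}{m}$; here $\binom{k}{2k+1-m}\neq 0$ only for $m\ge k+1$, so only $m=k+1$ remains and yields $\binom{k}{k}\binom{k+2}{k+1}=k+2$.

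I do not expect any genuine obstacle: the proof reduces to identifying, in each of the three one-parameter sums, exactly which values of $m$ keep the first binomial coefficient nonzero (a single inequality in each case) and then evaluating the one or two surviving summands. The only point requiring a little care is keeping the substitution $k \mapsto k+1$ straight in the last two cases and checking that the upper limit $\lfloor t/2\rfloor+1$ does not cut off any of those surviving terms. As a sanity check I would verify the three formulas for small $k$ (for instance $k=1$ gives $\gamma_1(P_3)=1$, $\gamma_2(P_4)=4$, $\gamma_2(P_5)=3$), and, if a purely combinatorial remark is wanted, note that the first line also follows from the fact that $\{2,5,\dots,3k-1\}$ is the unique minimum dominating set of $P_{3k}$.
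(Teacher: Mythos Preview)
Your proposal is correct and is exactly the argument the paper has in mind: the corollary is stated without proof immediately after Theorem~\ref{Sol}, so it is meant to follow by specializing that formula, and your computations of the surviving terms in each of the three cases are accurate. The sanity checks for $k=1$ and the combinatorial remark about the unique minimum dominating set of $P_{3k}$ are nice touches but not required.
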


By Corollary \ref{rec-cycles}, we have that $\gamma
_{k}(C_{n})=\gamma _{k-1}(C_{n-1})+\gamma _{k-1}(C_{n-2})+\gamma
_{k-1}(C_{n-3})$ for every $n\geq 4$ and $2 \leq k \leq n-2$ with
initial conditions $\gamma _{1}(C_{1})=1$, $\gamma _{1}(C_{2})=2$,
$\gamma _{1}(C_{3})=3$, $\gamma _{2}(C_{2})=1$ and $\gamma
_{2}(C_{3})=3$. Let us define the ordinary generating function for
the numbers $\gamma _{k}(C_{n})$ as
\begin{equation*}
H(x,y)=\sum_{n\geq 1}\sum_{k\geq 1}\gamma _{k}(C_{n})x^{n}\ y^{k}.
\end{equation*}

Analogously as we proved Theorem \ref{gf-paths}, we have the
following generating function for $\gamma _{k}(C_{n})$.

\begin{theorem}
\begin{equation*}
H(x,y)=\frac{x(1+2x+3x^{2})y}{1-(x+x^{2}+x^{3})y}.
\end{equation*}
\end{theorem}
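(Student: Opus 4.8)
The plan is to mimic exactly the proof of Theorem \ref{gf-paths}, the only difference being the bookkeeping of the finitely many boundary terms that fall outside the range where the recurrence of Corollary \ref{rec-cycles} applies. Write $\gamma_{n,k}=\gamma_k(C_n)$ and recall $\gamma_{n,k}=0$ for $n<k$. I would form the product $\left[1-(x+x^2+x^3)y\right]H(x,y)$ and expand it as $H(x,y)-xy\,H(x,y)-x^2y\,H(x,y)-x^3y\,H(x,y)$; substituting the double-series definition of $H$ and reindexing $n\mapsto n+1,n+2,n+3$ and $k\mapsto k+1$ in the three subtracted sums, the coefficient of $x^n y^k$ for $n\ge 4$ and $k\ge 2$ becomes $\gamma_{n,k}-\gamma_{n-1,k-1}-\gamma_{n-2,k-1}-\gamma_{n-3,k-1}$, which vanishes by Corollary \ref{rec-cycles}. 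Hence $\left[1-(x+x^2+x^3)y\right]H(x,y)$ is a polynomial supported on the low-order terms $x^n y^k$ with $n\le 3$ or $k\le 1$.

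Next I would collect those surviving terms explicitly. The $k=1$ contributions come only from $H(x,y)$ itself together with the shifts of $H$'s $k=1$ layer, giving $\gamma_{1,1}xy+\gamma_{2,1}x^2y+\gamma_{3,1}x^3y$ minus $\gamma_{1,1}x^2y^2+(\gamma_{1,1}x^3+\gamma_{2,1}x^3)y^2$ from the $k\mapsto k+1$ reindexing of those terms that land at $n\le 3$, and so on; the $k\ge 2$, $n\le 3$ terms are $\gamma_{1,2}xy^2+\gamma_{2,2}x^2y^2+\gamma_{3,2}x^3y^2$. Plugging in the initial values $\gamma_{1,1}=1$, $\gamma_{2,1}=2$, $\gamma_{3,1}=3$, $\gamma_{2,2}=1$, $\gamma_{3,2}=3$ and $\gamma_{1,2}=0$ (since $1<2$), the $y^2$ terms must cancel — this is the sanity check that the recurrence is consistent with the stated initial data — leaving only $\gamma_{1,1}xy+\gamma_{2,1}x^2y+\gamma_{3,1}x^3y = xy+2x^2y+3x^3y = x(1+2x+3x^2)y$. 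Dividing through by $1-(x+x^2+x^3)y$ yields the claimed closed form.

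The main obstacle is purely clerical rather than conceptual: one must be careful that the reindexing of the three subtracted series produces terms at exactly the right low-order positions, so that the boundary terms are enumerated without omission or double counting. In particular the $x^3y^2$ coefficient receives a contribution from \emph{both} the $x^2y\,H$ shift (via $\gamma_{1,1}$) and the $x^3y\,H$ shift (via $\gamma_{1,1}$, since the $x^3y\,H$ shift of the $n=1,k=1$ term lands at $x^4y^2$ — so actually care is needed to track which shift reaches which monomial), and the $x^2y^2$ coefficient receives $-\gamma_{1,1}$ from the $xy\,H$ shift plus $+\gamma_{2,2}$ from $H$. Verifying that $-\gamma_{1,1}+\gamma_{2,2}=-1+1=0$ and the analogous identity at $x^3y^2$, namely $-\gamma_{1,1}-\gamma_{2,1}+\gamma_{3,2}=-1-2+3=0$, confirms the cancellation. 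Once these finitely many coefficients are checked, the argument is complete; no estimate or limiting process is involved, and the formal-power-series manipulation is justified because $x$ and $y$ are independent indeterminates and every coefficient of $x^ny^k$ in $H$ is a finite sum.
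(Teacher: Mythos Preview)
Your proposal is correct and follows exactly the approach the paper intends: the paper gives no separate proof for this theorem, stating only that it is proved ``analogously as we proved Theorem \ref{gf-paths},'' and your computation---multiplying $H(x,y)$ by $1-(x+x^2+x^3)y$, applying Corollary \ref{rec-cycles} to kill the $n\ge 4,\ k\ge 2$ coefficients, and checking the boundary cancellations using the cycle initial values $\gamma_1(C_1)=1$, $\gamma_1(C_2)=2$, $\gamma_1(C_3)=3$, $\gamma_2(C_2)=1$, $\gamma_2(C_3)=3$---is precisely that analogy carried out in full. The slight hesitation in your middle paragraph about which shift lands where is unnecessary; your final coefficient checks at $x^2y^2$ and $x^3y^2$ are the right ones and are correct.
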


We determine the formal power series of $H(x,y)$ expanded in powers
of $y$ to obtain
\begin{eqnarray*}
H(x,y) &=&\frac{x(1+2x+3x^{2})y}{1-(x+x^{2}+x^{3})y} \\
&=&x(1+2x+3x^{2})y\sum_{k\geq 0}(x+x^{2}+x^{3})^{k}y^{k} \\
&=&\sum_{k\geq 1}x(1+2x+3x^{2})(x+x^{2}+x^{3})^{k-1}y^{k} \\
&=&\sum_{k\geq 1}\left( \sum_{n\geq 1}\gamma
_{k}(C_{n})~x^{n}\right) y^{k}.
\end{eqnarray*}

Therefore, for every $k\geq 1$%
\begin{eqnarray}
\sum_{n\geq 1}\gamma _{k}(C_{n})~x^{n}
&=&x(1+2x+3x^{2})(x+x^{2}+x^{3})^{k-1}
\notag \\
&=&x^{k}(1+2x+3x^{2})(1+x+x^{2})^{k-1}  \label{poly-cycles}
\end{eqnarray}%
is a monic polynomial denoted by $h_{k}(x)$ of degree $3k$ such that%
\begin{equation*}
h_{k}(x)=\sum_{n=k}^{3k}\gamma
_{k}(C_{n})~x^{n}=\sum_{t=0}^{2k}\gamma _{k}(C_{k+t})~x^{k+t}
\end{equation*}%
since $\gamma _{k}(C_{n})=0$ if $n<k$ and the well-known fact that
$\gamma (C_{n})=\left\lceil \frac{n}{3}\right\rceil $ for every
$n\geq 1$. Observe that $h_{k}(x)$ has $2k+1$ terms as $g_{k}(x)$
does. In this case, polynomial $h_{k}(x)$ has no symmetry.

\begin{theorem} \label{Sol-cycles}
For every $k\geq 1$ and $t\geq 0$, the number of dominating $k$-sets
of the cycle $C_{k+t}$ is \label{gamma-k-cycles}
\begin{equation*}
\gamma _{k}(C_{k+t})=\sum_{m=0}^{\left\lfloor
\frac{t}{2}\right\rfloor +1} \binom{k-1}{t-m}\left(
\binom{t-m+2}{m+2}\binom{t-m}{m-2}\right) .
\end{equation*}
\end{theorem}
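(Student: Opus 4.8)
The plan is to repeat, essentially verbatim, the proof of Theorem~\ref{Sol}, now starting from the polynomial identity~(\ref{poly-cycles}), i.e.\ from
\begin{equation*}
\sum_{n\geq 1}\gamma_{k}(C_{n})\,x^{n}=h_{k}(x)=x^{k}(1+2x+3x^{2})(1+x+x^{2})^{k-1},
\end{equation*}
and reading off the coefficient of $x^{k+t}$. The only structural difference between $h_{k}(x)$ and the path polynomial $g_{k}(x)$ of~(\ref{poly}) is the quadratic factor in front of $(1+x+x^{2})^{k-1}$: it is $1+2x+3x^{2}$ for cycles versus $(1+x)^{2}=1+2x+x^{2}$ for paths. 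The expansion~(\ref{1xx2}) of $(1+x+x^{2})^{k-1}$ is unchanged, so the whole computation reduces to tracking what multiplication by $1+2x+3x^{2}$ does to the coefficients of that double sum.

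First I would substitute~(\ref{1xx2}) into $h_{k}(x)$ and distribute $1+2x+3x^{2}$ exactly as in the path case. Collecting, for a fixed $l$, the coefficient of $x^{m}$ in the inner sum now produces $\binom{l}{m}+2\binom{l}{m-1}+3\binom{l}{m-2}$ in place of $\binom{l}{m}+2\binom{l}{m-1}+\binom{l}{m-2}$. Two applications of Pascal's rule collapse the latter to $\binom{l+2}{m}$, while the extra term $2x^{2}$ --- equivalently the decomposition $1+2x+3x^{2}=(1+x)^{2}+2x^{2}$ --- contributes the residual $2\binom{l}{m-2}$, so that
\begin{equation*}
\binom{l}{m}+2\binom{l}{m-1}+3\binom{l}{m-2}=\binom{l+2}{m}+2\binom{l}{m-2}.
\end{equation*}
Consequently $h_{k}(x)=x^{k}\sum_{l=0}^{k-1}\sum_{m=0}^{l+2}\binom{k-1}{l}\left[\binom{l+2}{m}+2\binom{l}{m-2}\right]x^{l+m}$.

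Finally I would set $t=l+m$ (so $l=t-m$) and extract the coefficient of $x^{k+t}$, which gives the value $\gamma_{k}(C_{k+t})$ asserted in the theorem. An equivalent, perhaps cleaner, route is to split $h_{k}(x)=g_{k}(x)+2x^{k+2}(1+x+x^{2})^{k-1}$, invoke Theorem~\ref{Sol} for the $g_{k}$-part, and apply~(\ref{1xx2}) with the reindexing $l\mapsto t-m$, $m\mapsto m-2$ to the second summand. I do not expect a genuine obstacle: once the Pascal collapse above is in hand everything is routine arithmetic. The one point requiring care is the range of the summation index, namely checking that the single range $0\leq m\leq\lfloor t/2\rfloor+1$ simultaneously accommodates both surviving binomial contributions --- out-of-range coefficients vanishing --- and that the factor $\binom{k-1}{t-m}$ silently enforces $t-m\leq k-1$, so that the upper summation limit stated in the theorem is the correct one.
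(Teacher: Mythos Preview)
Your proposal is correct and follows essentially the same route as the paper: expand $(1+x+x^{2})^{k-1}$ via~(\ref{1xx2}), distribute the prefactor $1+2x+3x^{2}$, collapse $\binom{l}{m}+2\binom{l}{m-1}+3\binom{l}{m-2}$ to $\binom{l+2}{m}+2\binom{l}{m-2}$ by Pascal's rule, and reindex with $t=l+m$. Your observation that $1+2x+3x^{2}=(1+x)^{2}+2x^{2}$, and the resulting shortcut $h_{k}(x)=g_{k}(x)+2x^{k+2}(1+x+x^{2})^{k-1}$, is a nice gloss the paper does not make explicit, but it is the same computation in slightly different packaging.
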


\begin{proof}
Using equalities (\ref{1xx2}) and (\ref{poly-cycles}), we have that
\begin{equation*}
h_{k}(x)=(x^{k}+2x^{k+1}+3x^{k+2})(1+x+x^{2})^{k-1}
\end{equation*}
\begin{eqnarray*}
&=&x^{k}\sum_{l=0}^{k-1}\binom{k-1}{l}\left( \sum_{m=0}^{l}\binom{l}{m}%
x^{m}\right) x^{l}+2x^{k+1}\sum_{l=0}^{k-1}\binom{k-1}{l}\left(
\sum_{m=0}^{l}\binom{l}{m}x^{m}\right) x^{l} \\
&&+3x^{k+2}\sum_{l=0}^{k-1}\binom{k-1}{l}\left( \sum_{m=0}^{l}\binom{l}{m}%
x^{m}\right) x^{l}
\end{eqnarray*}
\begin{eqnarray*}
&=&\sum_{l=0}^{k-1}\sum_{m=0}^{l}\binom{k-1}{l}\binom{l}{m}%
x^{k+l+m}+\sum_{l=0}^{k-1}\sum_{m=0}^{l}2\binom{k-1}{l}\binom{l}{m}%
x^{k+l+m+1} \\
&&+\sum_{l=0}^{k-1}\sum_{m=0}^{l}3\binom{k-1}{l}\binom{l}{m}x^{k+l+m+2}
\end{eqnarray*}
\begin{equation*}
=\sum_{l=0}^{k-1}\binom{k-1}{l}x^{k}\sum_{m=0}^{l}\left[ \binom{l}{m}%
x^{l+m}+2\binom{l}{m}x^{l+m+1}+3\binom{l}{m}x^{l+m+2}\right]
\end{equation*}
\begin{equation*}
=\sum_{l=0}^{k-1}\binom{k-1}{l}x^{k}\left[ \sum_{m=0}^{l+2}\left( \binom{l}{m%
}+2\binom{l}{m-1}+3\binom{l}{m-2}\right) x^{m+l}\right] .
\end{equation*}
Using Pascal's formula,
\begin{eqnarray*}
h_{k}(x) &=&x^{k}\sum_{l=0}^{k-1}\binom{k-1}{l}\left[
\sum_{m=0}^{l+2}\left(
\binom{l+2}{m}+2\binom{l}{m-2}\right) x^{m}\right] x^{l}\text{ } \\
&=&x^{k}\sum_{l=0}^{k-1}\sum_{m=0}^{l+2}\binom{k-1}{l}\left( \binom{l+2}{m}+2%
\binom{l}{m-2}\right) x^{l+m}
\end{eqnarray*}
\begin{eqnarray*}
&=&x^{k}\sum_{t=0}^{2k}\sum_{m=0}^{\left\lfloor \frac{t}{2}\right\rfloor +1}%
\binom{k-1}{t-m}\left( \binom{t-m+2}{m}+2\binom{t-m}{m-2}\right)
x^{t}\text{
} \\
&=&\sum_{t=0}^{2k}\sum_{m=0}^{\left\lfloor \frac{t}{2}\right\rfloor +1}%
\binom{k-1}{t-m}\left( \binom{t-m+2}{m}+2\binom{t-m}{m-2}\right)
x^{k+t}.
\end{eqnarray*}
Hence,
\begin{equation*}
\gamma _{k}(C_{k+t})=\sum_{m=0}^{\left\lfloor \frac{t}{2}\right\rfloor +1}%
\binom{k-1}{t-m}\left( \binom{t-m+2}{m}+2\binom{t-m}{m-2}\right) ,
\end{equation*}
the equality of the theorem.
\end{proof}

The explicit formula of the theorem generates sequence A212634 of
\cite{Sloane}.

\begin{cor}
Let $k$ be a positive integer. The number of the dominating sets of
minimum cardinality for cycles is
\begin{equation*}
\begin{array}{ll}
\qquad \gamma _{k}(C_{3k})=3 & \text{if }n=3k\text{,} \\
\gamma _{k+1}(C_{3k+1})=\binom{k+2}{2}+2\binom{k}{2}+3k & \text{if }n=3k+1%
\text{ and} \\
\gamma _{k+1}(C_{3k+2})=3k+2 & \text{if }n=3k+2\text{.}
\end{array}
\end{equation*}
\end{cor}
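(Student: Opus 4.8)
The plan is to read the three values directly off the closed formula of Theorem~\ref{Sol-cycles} by specialising its parameters to the three residue classes of $n$ modulo $3$. By the well-known fact that $\gamma(C_n)=\lceil n/3\rceil$, a dominating set of $C_n$ of minimum cardinality has $k$ vertices when $n=3k$ and $k+1$ vertices when $n=3k+1$ or $n=3k+2$; hence the quantities to be determined are $\gamma_k(C_{3k})$, $\gamma_{k+1}(C_{3k+1})$ and $\gamma_{k+1}(C_{3k+2})$, which are the instances of $\gamma_\kappa(C_{\kappa+t})=\sum_{m=0}^{\lfloor t/2\rfloor+1}\binom{\kappa-1}{t-m}\bigl(\binom{t-m+2}{m}+2\binom{t-m}{m-2}\bigr)$ with $(\kappa,t)=(k,2k)$, $(k+1,2k)$ and $(k+1,2k+1)$, respectively.

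First I would treat $n=3k$, i.e. $(\kappa,t)=(k,2k)$. The decisive remark is that the leading factor $\binom{\kappa-1}{t-m}=\binom{k-1}{2k-m}$ vanishes unless $2k-m\le k-1$, i.e. unless $m\ge k+1$; since the summation index never exceeds $\lfloor t/2\rfloor+1=k+1$, only the term $m=k+1$ survives, and it equals $\binom{k-1}{k-1}\bigl(\binom{k+1}{k+1}+2\binom{k-1}{k-1}\bigr)=1\cdot(1+2)=3$. The case $n=3k+2$, i.e. $(\kappa,t)=(k+1,2k+1)$, is handled identically: $\binom{k}{2k+1-m}=0$ unless $m\ge k+1$, so again only $m=k+1$ remains, giving $\binom{k}{k}\bigl(\binom{k+2}{k+1}+2\binom{k}{k-1}\bigr)=(k+2)+2k=3k+2$.

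The case $n=3k+1$, i.e. $(\kappa,t)=(k+1,2k)$, is where slightly more care is needed, and I expect it to be the main (though still routine) point: here $\binom{k}{2k-m}=0$ unless $m\ge k$, so \emph{two} terms survive, $m=k$ and $m=k+1$. The $m=k$ term equals $\binom{k}{k}\bigl(\binom{k+2}{k}+2\binom{k}{k-2}\bigr)=\binom{k+2}{2}+2\binom{k}{2}$, while the $m=k+1$ term equals $\binom{k}{k-1}\bigl(\binom{k+1}{k+1}+2\binom{k-1}{k-1}\bigr)=k\cdot(1+2)=3k$; adding them yields $\binom{k+2}{2}+2\binom{k}{2}+3k$, as claimed. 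The only things to watch are the bookkeeping of which summand is nonzero in each residue class, the check that the surviving indices do not exceed $\lfloor t/2\rfloor+1$, and the convention that binomial coefficients with out-of-range lower entries equal $0$ (relevant for very small $k$, where the boundary terms are most safely double-checked against the cycles $C_3,C_4,C_5$ directly). A purely combinatorial alternative exists for $n=3k$ — the minimum dominating sets there are precisely the three ``equally spaced'' vertex classes $\{i,i+3,i+6,\dots\}$ — but routing all three residues through Theorem~\ref{Sol-cycles} verbatim seems the shortest path.
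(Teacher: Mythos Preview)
Your proposal is correct and is exactly the derivation the paper intends: the corollary is stated immediately after Theorem~\ref{Sol-cycles} with no separate argument, so specialising the closed formula to $(\kappa,t)=(k,2k)$, $(k+1,2k)$, $(k+1,2k+1)$ and reading off the surviving summands is precisely what is being left to the reader. Your bookkeeping of which indices $m$ give a nonzero leading binomial $\binom{\kappa-1}{t-m}$, and your evaluation of the one or two surviving terms in each residue class, are all accurate (and your cautionary remark about the convention $\binom{a}{b}=0$ for $b<0$ is exactly what is needed for small $k$).
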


By Theorem \ref{gamma-k-cycles} and Corollary \ref{wheels}, we conclude that%
\begin{eqnarray*}
\gamma _{k}(W_{k+t}) &=&\binom{k+t-1}{t}+\gamma _{k}(C_{k-1+t}) \\
&=&\binom{k+t-1}{t}+\sum_{m=0}^{\left\lfloor
\frac{t}{2}\right\rfloor +1} \binom{k-2}{t-m}\left(
\binom{t-m+2}{m}+2\binom{t-m}{m-2}\right) ,
\end{eqnarray*}
for every $k\geq 1$ and $t\geq 0$ such that  $k+t \geq 4$. This
identity generates the terms of sequence A212635 of \cite{Sloane}.

\bigskip

\textbf{Acknowledgement. }This research is supported by Grant
CONACYT CB2012-01178910.


\begin{thebibliography}{99}

\bibitem{AAP} S. Akbari, S. Alikhani and Y.H. Peng, \emph{Characterization of
graphs using domination polynomials}, Europ. J. Combin. \textbf{31}
(2010) 1714--1724.

\bibitem{AP} S. Alikhani and Y.H. Peng, \emph{Dominating sets and domination polynomials of paths},
Int. J. Math. Math. Sci. Vol. 2009 (2009) Art. ID 542040, 10 pp.

\bibitem{AP-cycles} S. Alikhani and Y.H. Peng, \emph{ Dominating sets and domination polynomials of certain graphs. II},
Opuscula Math. \textbf{30} (2010), 37–-51.

\bibitem{BA-MP} B. M. Anthony and M. E. Picollelli, \emph{Complete $r$-partite
graphs determined by their domination polynomials}, Graph Combin.
\textbf{31} (2015) 1993--2002.

\bibitem{AroLl} J. Arocha and B. Llano, \emph{Mean value for the matching and
dominating polynomial}, Discuss. Math. Graph Theory \textbf{20}
(2000) 57--70.

\bibitem{BJ-G} J. Bang-Jensen and G. Gutin Digraphs. Theory, algorithms and
applications. Second edition (Springer Monographs in Mathematics,
Springer-Verlag London, Ltd., London, 2009)

\bibitem{BT-book} F. Beichelt and P. Tittmann, Reliability and
Maintenance: Networks and Systems (Chapman and Hall/CRC Press,
Taylor \& Francis Group, 2012)

\bibitem{BT} J,I. Brown and J. Tufts, \emph{On the roots of domination
polynomials}, Graph Combin. \textbf{30} (2014) 527--547.

\bibitem{Dod} M. Dod, \emph{Graph products on the trivariate total domination
polynomial and related polynomials}, Discrete Applied Math. (2015),
on line doi:10.1016/j.dam.2015.10.008.

\bibitem{DKPT} M. Dod, T. Kotek, J. Preen and P. Tittmann, \emph{Bipartition
polynomials, the Ising model and domination in graphs}, Discuss.
Math. Graph Theory \textbf{35} (2015) 335--353.

\bibitem{DT} K. Dohmen and P. Tittmann, \emph{Domination Reliability}, The
Electronic J. Combin. \textbf{19} (2012) \#P15, 14 pp.

\bibitem{F1} M. Feinberg, \emph{Fibonacci--Tribonacci}, Fibonacci Quart. \textbf{1}(3) (1963) 71--74.

\bibitem{F2} M. Feinberg, \emph{New Slants}, Fibonacci Quart. \textbf{2}(2) (1964) 223--227.

\bibitem{KPSTT} T. Kotek, J. Preen, F. Simon, P. Tittmann and M. Trinks,
\emph{Recurrence relations and splitting formulas for the domination
polynomial}, The Electronic J. Combin. \textbf{19}(3) (2012) \#P47,
25 pp.

\bibitem{MRB} J. A. Makowsky, E. V. Ravve and N. K. Blanchard, \emph{On the
location of roots of graphs polynomials}, Europ. J. Combin.
\textbf{41} (2014) 1--19.

\bibitem{Shannon} A. G. Shannon, \emph{Tribonacci numbers and Pascal's pyramid}, Fibonacci Quart. \textbf{15}(3) (1977) 268--275.

\bibitem{Sloane} N. J. A. Sloane, The On-Line Encyclopedia of Integer
Sequences (OEIS), published on line at https://oeis.org/, sequences
A212633 (paths), A212634 (cycles) and A212635 (wheels).

\end{thebibliography}
\end{document}